\newtheorem{theorem}{Theorem}[section]
\newtheorem{lemma}[theorem]{Lemma}
\newtheorem{corollary}[theorem]{Corollary}
\numberwithin{equation}{section}
\providecommand{\keywords}[1]{\textbf{Key words.} #1}
\title{A mixed-element two-grid discretization for Helmholtz transmission eigenvalues}
\author{Yao Luo, Yidu Yang
 \\\\
{\small School of Mathematics and Computer Science, }\\{\small
Guizhou Normal University,  Guiyang,  $550001$,  China}\\{\small
447004837@qq.com,
 ydyang@gznu.edu.cn}
}
\begin{document}
\date{}
\maketitle
\begin{abstract}The Helmholtz transmission eigenvalue problem has received much concern in materials science,
so it's significant to explore the efficient calculational method of
the problem to mathematics and mechanics community. In this paper,
based on a variational formulation proposed by Cakon, Monk and Sun, we introduce a
mixed-element two-grid discretization and prove error estimates for
this method theoretically.
 Some numerical results are presented to confirm the theoretical analysis and  show that the method here is efficient.
\end{abstract}

\keywords{transmission eigenvalues, finite element method, a
mixed-element two-grid discretization, error estimates}


\section{Introduction}
\indent The transmission eigenvalue problem is a hot topic in mathematics and mechanics community because
it has widely applications in materials science. For instance, the measured transmission eigenvalues
 can be used to estimate properties of the scatterer \cite{1,2}, and the transport through a quantum-scale
  device may be uniquely characterized by its transmission eigenvalues \cite{3}.
   For that reason, there exist many researches such as \cite{colton2,ji,monk,8} for the numerical treatments of transmission eigenvalue problem.\\
\indent We know that the transmission eigenvalue problem is difficult to solve because it is a quadratic and non-selfadjoint problem
and its knowledge system is not covered by the standard theory of eigenvalue problems \cite{8}. To solve the problem,
\cite{9} uses multigrid method with one correction step on each iteration, \cite{sun1} applies iterative method to compute the transmission eigenvalues. Both of them transform the problem
 to a series of selfadjoint eigenvalue problems. However, the two-grid discretization could be used to solve
  non-slefadjoint finite eigenvalue problems directly. It is first introduced by Xu \cite{10}, then popularized on many eigenvalue problems
  (see \cite{11,12,19,zhou,yang2,yang3,13}). Recently \cite{13} utilizes a $H^{2}$ conforming element two-grid discreization
  to compute the transmission eigenvalues successfully.
  From it we know before using the two-grid discretization we need a good variational formulation firstly.
  At this field, Cakoni, Monk and Sun \cite{8} bring forth a new weak formulation and make the error analysis for transmission eigenvalues of the finite element approximation. Based on the framework of \cite{8},
  we propose a mixed-element two-grid discretization to calculate these eigenvalues.\\
\indent Now we introduce the characteristics of our method. The main idea is to transform a eigenvalue problem to two eigenvalue problems
on a coarse grid $\pi_H$ and two boundary value problems on a fine grid $\pi_h$. And the coefficient matrices of the two problems on the grid $\pi_h$
are the same such that we don't need much time to assemble the matrices respectively. In addition, this method can
keep fast rate of convergence which will be proved by our numerical results later. To analyse the correctness of mixed-element
 two-grid discretization we shall do the error estimates for eigenfunctions in norm $\|\cdot\|_{H_{0}^{1}(\Omega)\times L^{2}(\Omega)}$.
  Inspired by \cite{13}, we prove it by using Aubin-Nitsche technique and we also discuss the error estimate of eigenvalues under the condition of $n\in L^\infty(\Omega)$.
  By the way, the estimates in norm $\|\cdot\|_{H_{0}^{1}(\Omega)\times L^{2}(\Omega)}$ is very important
  to a posteriori error estimates which is the foundation of adaptive method. \\
\indent In this paper, we need the basic theory of finite element
methods of \cite{14,15,16}. \\
\indent Suppose that $C$ is a positive
constant independent of $h$, which may not be the same constant in
different places.
For simplicity, we use symbol $a\lesssim b$ to replace $a\leq Cb$.\\

\section{The finite element method}
\indent The Helmholtz transmission eigenvalue problem
is: Find $k\in \mathbb{C}$, $w, \sigma\in L^{2}(\Omega)$,
$w-\sigma\in H^{2}(\Omega)$ which satisfy
\begin{eqnarray}\label{ss2.1}
&&\Delta w+k^{2}n(x)w=0,~~~in~ \Omega,\\\label{ss2.2}
 &&\Delta
\sigma+k^{2}\sigma=0,~~~in~ \Omega,\\\label{ss2.3}
 &&w-\sigma=0,~~~on~ \partial
\Omega,\\\label{ss2.4}
 &&\frac{\partial w}{\partial \nu}-\frac{\partial
\sigma}{\partial \nu}=0,~~~ on~\partial \Omega ,
\end{eqnarray}
where $\Omega \subset \mathbb{R}^2$ is a bounded Lipschitz domain,
real valued function $n \in L^\infty(\Omega)$ such that $n-1$ is
strictly postive (or strictly negative) almost everywhere in
$\Omega$.\\

 \indent Regarding the problem (\ref{ss2.1})-(\ref{ss2.4})
papers \cite{cakoni3,7} transform it to a quadratic eigenvalue
problem: Find $k^{2}\in \mathbb{C}$, $k^{2}\not=0$, $u=w-\sigma\in
H_{0}^{2}(\Omega)$ such that
\begin{eqnarray}\label{ss2.5}
\int\limits_{\Omega}\frac{1}{n(x)-1}(\Delta u+k^{2} u)(\Delta \overline{v}+k^{2} n(x)\overline{v})dx=0,~~~\forall v \in~H_{0}^{2}(\Omega).
\end{eqnarray}
\indent Assume that there exists some constant $\delta >0$  such
that
 \begin{eqnarray}
 1+\delta\leq \inf\limits_{\Omega}n(x)\leq n(x)\leq \sup\limits_{\Omega}n(x)<\infty.\nonumber
 \end{eqnarray}
Then we employ some symbols from \cite{8}, let $$(u,v)_{n-1}=\int\limits_{\Omega}\frac{1}{n-1}u\overline{v}dx,$$
$$(u,v)=\int\limits_{\Omega}u\overline{v}dx.$$
And we define the sesquilinear form $A$ on $(H_{0}^{2}(\Omega)\times H_{0}^{1}(\Omega))\times (H_{0}^{2}(\Omega)\times H_{0}^{1}\Omega)$ by
\begin{eqnarray}
\label{s2.1}&&A((u,w),(v,z))=(\Delta u,\Delta v)_{n-1}+(\nabla w,\nabla z).
\end{eqnarray}
\indent For convenience, define Hilbert space $\mathbf{H}=H_{0}^{2}(\Omega)\times H_{0}^{1}(\Omega)$ with norm $\|(u,w)\|_{\mathbf{H}}=\|u\|_{2}+\|w\|_{1}$($\|\cdot\|_{l}$ is the norm of Sobolev space $H^{l}(\Omega)$), and define $\mathbf{H_{1}}=H_{0}^{1}(\Omega)\times L^{2}(\Omega)$ with norm $\|(u,w)\|_{\mathbf{H_{1}}}=\|u\|_{1}+\|w\|_{0}$.\\
\indent Note that $A(\cdot,\cdot)$ is an inner product on $\mathbf{H}=H_{0}^{2}(\Omega)\times H_{0}^{1}(\Omega)$,
norm $\|\cdot\|_{A}=A(\cdot,\cdot)^{\frac{1}{2}}$ is equivalent to the norm $\|\cdot\|_{\mathbf{H}}$,
 and
$\mathbf{H}\hookrightarrow \mathbf{H_{1}}$
compactly.\\
 \indent Let
 \begin{eqnarray}
 B((u,w),(v,z))=-((u,\Delta
v)_{n-1}+(\Delta u,nv)_{n-1}-(\nabla w,\nabla v)+(nu,z)_{n-1}).\label{s2.24}
\end{eqnarray}
When $n\in L^{\infty}(\Omega)$,
\begin{eqnarray}
&&~~~|B((u,w),(v,z))|\nonumber\\
&&=|(u,\Delta v)_{n-1}+(\Delta u,nv)_{n-1}-(\nabla w,\nabla v)+(nu,z)_{n-1}|\nonumber\\
&&=|(u,\Delta v)_{n-1}+(\Delta u,nv)_{n-1}-(w,\Delta v)+(nu,z)_{n-1}|\nonumber\\
&&\lesssim\|u\|_{0}\|v\|_{2}+\|\frac{n}{n-1}\Delta u\|_{-1}\|v\|_{1}+\|w\|_{0}\|v\|_{2}
+\|u\|_{0}\|z\|_{0}\nonumber\\
&&\lesssim(\|u\|_{0}+\|\frac{n}{n-1}\Delta u\|_{-1}+\|w\|_{0})\|(v,z)\|_{\mathbf{H}},
~\forall(u,w),(v,z)\in\mathbf{H}.\label{s2.23}
\end{eqnarray}
When $n\in W^{1,\infty}(\Omega)$,
\begin{eqnarray}
&&~~~|B((u,w),(v,z))|\nonumber\\
&&=|(u,\Delta v)_{n-1}-(\nabla u,\nabla (\frac{n}{n-1}v))+(w,\Delta v)+(nu,z)_{n-1}|\nonumber\\
&&\lesssim\|u\|_{0}\|v\|_{2}+\|u\|_{1}\|v\|_{1}+\|w\|_{0}\|v\|_{2}
+\|u\|_{0}\|z\|_{0}\nonumber\\
&&\lesssim(\|u\|_{1}+\|w\|_{0})(\|v\|_{2}+\|z\|_{1})\nonumber\\
&&=\|(u,w)\|_{\mathbf{H_{1}}}\|(v,z)\|_{\mathbf{H}},
~~~\forall(u,w)\in\mathbf{H_{1}},
\forall(v,z)\in\mathbf{H}.\label{s2.2}
\end{eqnarray}
\indent (\ref{s2.23}) and (\ref{s2.2}) tell us that for any given $(u,w)\in\mathbf{H}$ or $(u,w)\in\mathbf{H_1}$, $B((u,w),(v,z))$ is a continuous linear form on $\mathbf{H}$.\\
\indent In paper \cite{8} Monk et al. establish  the weak formulation of
(\ref{ss2.1})-(\ref{ss2.4}): find $(u,w)\in\mathbf{H}$ and
$\lambda\in \mathbb{C}$ such that
\begin{eqnarray}
&&\lambda A((u,w),(v,z))=B((u,w),(v,z)),~~~\forall(v,z)\in\mathbf{H}.\label{s2.3}
\end{eqnarray}
The number $(\lambda)^{-1}=\tau=k^2$ is transmission eigenvalue.\\
\indent The source problem associated with (\ref{s2.3}) is: For any given $(f,g)\in\mathbf{H_{1}}$, find $(\psi,\varphi)\in\mathbf{H}$ such that
\begin{eqnarray}
&&A((\psi,\varphi),(v,z))=B((f,g),(v,z)),~~~\forall(v,z)\in\mathbf{H}.\label{s2.4}
\end{eqnarray}
From Lax-Milgram theorem we know that (\ref{s2.4}) exists an unique solution, therefore we define the corresponding solution operator $T:\mathbf{H_{1}}\rightarrow\mathbf{H}$ by
\begin{eqnarray}
&&A(T(f,g),(v,z))=B((f,g),(v,z)),~~~\forall(v,z)\in\mathbf{H}.\label{s2.5}
\end{eqnarray}
\indent Then, in operator notation, the problem of
(\ref{s2.3}) is to find $(u,w)\in\mathbf{H}\backslash\{\mathbf{0}\}$
and $\lambda\in \mathbb{C}$ such that
\begin{eqnarray}
&&T(u,w)=\lambda(u,w).\label{s2.6}
\end{eqnarray}
Referring to Theorem 2.2 of \cite{13} we can deduce the following result:
\begin{lemma}
When $n\in L^{\infty}(\Omega)$ ,the operator $T:\mathbf{H}\rightarrow\mathbf{H}$ is compact,
 and when $n\in W^{1,\infty}(\Omega)$, $T:\mathbf{H_{1}}\rightarrow\mathbf{H_{1}}$ is compact.\\
 \end{lemma}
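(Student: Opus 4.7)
The plan is to use the coercivity of $A$ on $\mathbf{H}$ (so that $\|T(f,g)\|_{\mathbf{H}}$ is controlled by the dual norm on $\mathbf{H}$ of the functional $(v,z)\mapsto B((f,g),(v,z))$) together with the two continuity estimates (\ref{s2.23}) and (\ref{s2.2}) for $B$, and the compact embedding $\mathbf{H}\hookrightarrow\mathbf{H_1}$. The idea is that each estimate lets us bound $\|T(f,g)\|_{\mathbf{H}}$ by a weaker norm on $(f,g)$, and then a compactness argument finishes the job.

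I would start with the easier case $n\in W^{1,\infty}(\Omega)$. Taking $(v,z)=T(f,g)$ in (\ref{s2.5}) and using the $A$--$\mathbf{H}$ norm equivalence gives
\begin{eqnarray*}
\|T(f,g)\|_{\mathbf{H}}^{2}\lesssim A(T(f,g),T(f,g))=B((f,g),T(f,g))\lesssim \|(f,g)\|_{\mathbf{H_1}}\|T(f,g)\|_{\mathbf{H}},
\end{eqnarray*}
where the last step uses (\ref{s2.2}). Dividing out $\|T(f,g)\|_{\mathbf{H}}$ shows $T:\mathbf{H_1}\to\mathbf{H}$ is bounded. Composing with the compact embedding $\mathbf{H}\hookrightarrow\mathbf{H_1}$ immediately yields the compactness of $T:\mathbf{H_1}\to\mathbf{H_1}$.

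For the case $n\in L^{\infty}(\Omega)$ I would take a bounded sequence $\{(f_k,g_k)\}$ in $\mathbf{H}$ and show $\{T(f_k,g_k)\}$ has a convergent subsequence in $\mathbf{H}$. The same coercivity trick together with (\ref{s2.23}) applied to the difference $(f_k-f_l,g_k-g_l)$ yields
\begin{eqnarray*}
\|T(f_k-f_l,g_k-g_l)\|_{\mathbf{H}}\lesssim \|f_k-f_l\|_{0}+\|\tfrac{n}{n-1}\Delta(f_k-f_l)\|_{-1}+\|g_k-g_l\|_{0}.
\end{eqnarray*}
Now $\{f_k\}$ is bounded in $H_0^{2}(\Omega)$, so $\{\Delta f_k\}$ is bounded in $L^{2}(\Omega)$, hence (since $\tfrac{n}{n-1}\in L^{\infty}$) $\{\tfrac{n}{n-1}\Delta f_k\}$ is bounded in $L^{2}(\Omega)$. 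I would then extract a subsequence by invoking the compact embeddings $H_{0}^{2}(\Omega)\hookrightarrow L^{2}(\Omega)$, $L^{2}(\Omega)\hookrightarrow H^{-1}(\Omega)$ (which is compact on bounded Lipschitz domains by Rellich duality), and $H_{0}^{1}(\Omega)\hookrightarrow L^{2}(\Omega)$, one after another, to make all three terms on the right-hand side Cauchy. The inequality then forces $\{T(f_k,g_k)\}$ to be Cauchy in $\mathbf{H}$, giving compactness.

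The part that needs the most care is the $\|\tfrac{n}{n-1}\Delta f_k\|_{-1}$ term in the low-regularity case, since $n\in L^{\infty}$ alone does not let us integrate $\Delta$ by parts against a rough factor. The resolution is precisely to view $\tfrac{n}{n-1}\Delta f_k$ as a genuine $L^{2}$ function (using the $H^{2}$ regularity of $f_k$ and the $L^\infty$ bound on $n$) and then cash in the compactness of $L^{2}\hookrightarrow H^{-1}$; without this observation the estimate (\ref{s2.23}) could not be turned into a compactness statement. Everything else is standard.
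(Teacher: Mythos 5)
Your proof is correct, and it is essentially the argument the paper intends: the paper gives no proof of its own, merely citing Theorem 2.2 of \cite{13}, and the estimates (\ref{s2.23}) and (\ref{s2.2}) are derived immediately before the lemma precisely so that, combined with the coercivity (norm equivalence) of $A$ and the compact embeddings $H_{0}^{2}(\Omega)\hookrightarrow L^{2}(\Omega)$, $L^{2}(\Omega)\hookrightarrow H^{-1}(\Omega)$, $H_{0}^{1}(\Omega)\hookrightarrow L^{2}(\Omega)$ (equivalently $\mathbf{H}\hookrightarrow\mathbf{H_{1}}$), they yield exactly the two compactness statements as you argue. Your handling of the $\|\frac{n}{n-1}\Delta f_k\|_{-1}$ term as an $L^{2}$ function pushed through the compact embedding $L^{2}(\Omega)\hookrightarrow H^{-1}(\Omega)$ is the right way to make the $n\in L^{\infty}(\Omega)$ case work, so no gap remains.
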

\indent The dual problem of (\ref{s2.3}) is: Find $(u^{*},w^{*})\in
\mathbf{H}\backslash\{\mathbf{0}\}$ and $\lambda^{*}\in \mathbb{C}$
such that
\begin{eqnarray}
&&\overline{\lambda^{*}}A((v,z),(u^{*},w^{*}))=B((v,z),(u^{*},w^{*})),
~~~\forall(v,z)\in \mathbf{H}.\label{s2.7}
\end{eqnarray}
Note that (\ref{s2.3}) and (\ref{s2.7}) are connected via $\lambda=\overline{\lambda^{*}}$.\\
\indent Using the same method we define corresponding operator $T^{*}:\mathbf{H_{1}}\rightarrow\mathbf{H}$ by
\begin{eqnarray}
&&A((v,z),T^{*}(f,g))=B((v,z),(f,g)),~~~\forall(v,z)\in\mathbf{H},\label{s2.8}
\end{eqnarray}
and (\ref{s2.8}) has the equivalent operator form:
\begin{eqnarray}
&&T^{*}(u,w)=\lambda^{*}(u,w).\label{s2.9}
\end{eqnarray}
\indent It's obvious that $T^{*}$ is the adjoint operator of $T$ in the sense of inner product $A(\cdot,\cdot)$, in fact
\begin{eqnarray}
&&A(T(f,g),(v,z))=B((f,g),(v,z))\nonumber\\
&&~~~~~~~~~~~~~~~~~~~~~~=A((f,g),T^{*}(v,z)),
~~~\forall(f,g),(v,z)\in\mathbf{H}.\label{s2.10}
\end{eqnarray}
\indent Let $\pi_{h}$ be a shape-regular grid of
$\Omega$ with mesh size $h$. As the same with \cite{8}, we use
$X_{h}\subset H_{0}^{2}(\Omega)$ and $Y_{h}\subset
H_{0}^{1}(\Omega)$ to compute
 the finite dimensional problem. Let $\mathbf{H_{h}}=X_{h}\times Y_{h}$ and $\mathbf{H_{h}}\subset
 \mathbf{H}$.
  The $X_{h}$ can be made up of one of the Argyris element, the Bell element, and the Bonger-Fox-Schmit elemnt (see \cite{16}). On the other hand, $Y_{h}$ can be built with bilinear Lagrange element or biquadratic Lagrange element \cite{15}.\\
\indent Then the finite element approximation of
(\ref{s2.3}) is: Find $\lambda_{h}\in \mathbb{C}$, $(u_{h},w_{h})\in
\mathbf{H_{h}}\backslash \{\mathbf{0}\}$ such that
\begin{eqnarray}
&&\lambda_{h}A((u_{h},w_{h}),(v,z))=B((u_{h},w_{h}),(v,z)),
~~~\forall(v,z)\in \mathbf{H_{h}}.\label{s2.11}
\end{eqnarray}
\indent Consider the approximate source problem: For any given $(f,g)\in \mathbf{H_{1}}$, find $(\psi_{h},\varphi_{h})\in\mathbf{H_{h}}$ such that
\begin{eqnarray}
&&A((\psi_{h},\varphi_{h}),(v,z))=B((f,g),(v,z)),
~~~\forall(v,z)\in \mathbf{H_{h}}.\label{s2.12}
\end{eqnarray}
And then , the associated solution operator
$T_{h}:\mathbf{H_{1}}\rightarrow \mathbf{H_{h}}$
satisfies
\begin{eqnarray}
&&A(T_{h}(f,g),(v,z))=B((f,g),(v,z)),
~~~\forall(v,z)\in \mathbf{H_{h}}.\label{s2.13}
\end{eqnarray}
The equation (\ref{s2.11}) has the operator form:
\begin{eqnarray}
&&T_{h}(u_{h},w_{h})=\lambda_{h}(u_{h},w_{h}).\label{s2.14}
\end{eqnarray}
\indent Using the same method we can easily give the finite element approximate problem of (\ref{s2.7}) and define the conjugated operator $T_{h}^{*}$ in the sense of inner product $A(\cdot,\cdot)$ on $\mathbf{H_h}$. We omit it here striving for conciseness.\\
\indent Now define the projection operator $P_{h}^{1}:H_{0}^{2}(\Omega)\rightarrow X_{h}$ and $P_{h}^{2}:H_{0}^{1}(\Omega)\rightarrow Y_{h}$ by
\begin{eqnarray}
&&(\Delta(u-P_{h}^{1}u),\Delta v))_{n-1}=0,
~~~\forall v\in X_{h},\label{s2.15}\\
&&(\nabla(w-P_{h}^{2}w),\nabla z)=0,
~~~\forall z\in Y_{h}.\label{s2.16}
\end{eqnarray}
Let $$P_{h}(u,w)=(P_{h}^{1}u,P_{h}^{2}w),~~~\forall(u,w)\in\mathbf{H}.$$
Then $P_{h}:\mathbf{H}\rightarrow\mathbf{H_{h}}$ and
\begin{eqnarray}
&&A((u,w)-P_{h}(u,w),(v,z))=A((u,w)-(P_{h}^{1}u,P_{h}^{2}w),(v,z))\nonumber\\
&&~~~=(\Delta(u-P_{h}^{1}u),\Delta v)_{n-1}+(\nabla(w-P_{h}^{2}w),\nabla z)\nonumber\\
&&~~~=0,~~~\forall(v,z)\in\mathbf{H_{h}}.\label{s2.17}
\end{eqnarray}
$P_{h}:\mathbf{H}\rightarrow\mathbf{H_{h}}$ is the Ritz projection.\\
\indent Utilizing (\ref{s2.17}), (\ref{s2.5}) and (\ref{s2.13}) we get that for any $(u,w)\in\mathbf{H}$,
\begin{eqnarray}
&&A(P_{h}T(u,w)-T_{h}(u,w),(v,z))\nonumber\\
&&~~~~~~~~~~=A(P_{h}T(u,w)-T(u,w),(v,z))+A(T(u,w)-T_{h}(u,w),(v,z))\nonumber\\
&&~~~~~~~~~~=0,~~~\forall(v,z)\in\mathbf{H_{h}}.\label{s2.18}
\end{eqnarray}
So
\begin{eqnarray}
&&T_{h}=P_{h}T.\label{s2.19}
\end{eqnarray}

\section{Error analysis}
\indent   Throughout this paper , we suppose the following condition $(C1)$ holds:\\
\indent $(C1)$ If $(\psi,\varphi)\in\mathbf{H}$, then as $h\rightarrow 0$,
$$\inf\limits_{(v,z)\in \mathbf{H_{h}}
}\|(\psi,\varphi)-(v,z)\|_{\mathbf{H}}\to 0.$$
 \indent
For the operators $T$ and $T_h$, we have the following important conclusions:
\begin{theorem}
Let $n \in L^{\infty}(\Omega)$, then
\begin{eqnarray}\label{s3.1}
&&\|T-T_{h}\|_{\mathbf{H}}\to 0,
\end{eqnarray}
and when $n \in W^{1,\infty}(\Omega)$, there exists
\begin{eqnarray}\label{s3.2}
&&\|T-T_{h}\|_{\mathbf{H}_{1}}\to 0.
\end{eqnarray}
\end{theorem}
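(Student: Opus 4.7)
The plan is to build on the identity $T-T_h=(I-P_h)T$, which is (\ref{s2.19}) rewritten, and then cast the two statements as instances of the principle ``Galerkin approximation of a compact operator converges in norm''. Since $P_h$ is the $A$-orthogonal projection onto $\mathbf{H_{h}}$, one has $\|P_h\|_A\le 1$; equivalence of $\|\cdot\|_A$ with $\|\cdot\|_{\mathbf{H}}$ then makes $\{P_h\}$ uniformly bounded on $\mathbf{H}$, and the best-approximation property combined with hypothesis $(C1)$ yields
\begin{eqnarray*}
\|(I-P_h)(u,w)\|_{\mathbf{H}}\lesssim\inf_{(v,z)\in\mathbf{H_{h}}}\|(u,w)-(v,z)\|_{\mathbf{H}}\to 0,\quad\forall(u,w)\in\mathbf{H},
\end{eqnarray*}
i.e.\ $P_h\to I$ strongly on $\mathbf{H}$.

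For (\ref{s3.1}), Lemma 2.1 gives $T:\mathbf{H}\to\mathbf{H}$ compact when $n\in L^{\infty}(\Omega)$, so the image of the closed unit ball in $\mathbf{H}$ under $T$ is precompact in $\mathbf{H}$. I would then invoke the routine finite $\epsilon$-net argument (a uniformly bounded sequence of operators converging pointwise to the identity converges uniformly on any precompact set) to conclude $\sup_{\|(f,g)\|_{\mathbf{H}}\le 1}\|(I-P_h)T(f,g)\|_{\mathbf{H}}\to 0$, which is (\ref{s3.1}).

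For (\ref{s3.2}), I would run the same scheme in the weaker norm $\|\cdot\|_{\mathbf{H_{1}}}$. Estimate (\ref{s2.2}) shows $T:\mathbf{H_{1}}\to\mathbf{H}$ is bounded when $n\in W^{1,\infty}(\Omega)$, so
\begin{eqnarray*}
\|T-T_h\|_{\mathbf{H_{1}}\to\mathbf{H_{1}}}\le\|I-P_h\|_{\mathbf{H}\to\mathbf{H_{1}}}\,\|T\|_{\mathbf{H_{1}}\to\mathbf{H}},
\end{eqnarray*}
and the entire matter reduces to proving $\|I-P_h\|_{\mathbf{H}\to\mathbf{H_{1}}}\to 0$. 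Exploiting the splitting $P_h=(P_h^1,P_h^2)$, I would establish this componentwise by Aubin--Nitsche duality: for $P_h^2$, testing against an auxiliary Poisson problem and using $(C1)$ on $Y_h$ together with the compact embedding $H_{0}^{1}(\Omega)\hookrightarrow L^{2}(\Omega)$ yields $\|w-P_h^2 w\|_0/\|w\|_1\to 0$ uniformly; for $P_h^1$, the analogous duality against a biharmonic-type auxiliary problem combined with $(C1)$ on $X_h$ gives $\|u-P_h^1 u\|_1/\|u\|_2\to 0$ uniformly.

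The main obstacle is exactly this $\mathbf{H}\to\mathbf{H_{1}}$ step. In the $\mathbf{H}$-case, pointwise convergence of $\{P_h\}$ plus compactness of $T$ is enough; in the $\mathbf{H_{1}}$-case one cannot simply combine pointwise convergence with compactness of the embedding $\mathbf{H}\hookrightarrow\mathbf{H_{1}}$, because $P_h$ is not naturally defined on the larger space $\mathbf{H_{1}}$, and in an $\epsilon$-net chosen with respect to $\|\cdot\|_{\mathbf{H_{1}}}$ the leftover $\|P_h(\eta-\eta_j)\|_{\mathbf{H_{1}}}$ can only be bounded by $\|\eta-\eta_j\|_{\mathbf{H}}$, which need not be small. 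The hypothesis $n\in W^{1,\infty}(\Omega)$ enters precisely in the duality step, since it is needed to ensure that the adjoint of the biharmonic-type bilinear form produces a solution whose Galerkin approximation in $X_h$ converges, so that $(C1)$ can supply the required smallness.
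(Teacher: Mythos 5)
Your handling of (\ref{s3.1}) is correct and is essentially the intended argument (the paper itself gives no details, deferring to the proof of Theorem 3.1 in \cite{13}): write $T-T_h=(I-P_h)T$, use that $P_h$ is the $A$-orthogonal projection (hence uniformly bounded and, by quasi-optimality plus $(C1)$, pointwise convergent to $I$ on $\mathbf{H}$), and upgrade to norm convergence by compactness of $T:\mathbf{H}\rightarrow\mathbf{H}$ via the $\epsilon$-net argument. The reduction of (\ref{s3.2}) to showing $\|I-P_h\|_{\mathbf{H}\rightarrow\mathbf{H_1}}\rightarrow 0$, using that $T:\mathbf{H_1}\rightarrow\mathbf{H}$ is bounded when $n\in W^{1,\infty}(\Omega)$ (this boundedness, via (\ref{s2.2}), is in fact the only place the $W^{1,\infty}$ hypothesis is needed), is also correct.

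The gap is in how you close that last step. For each \emph{fixed} dual datum, $(C1)$ gives convergence of the Ritz approximation of the corresponding auxiliary solution, but that is only pointwise convergence; what $\|I-P_h\|_{\mathbf{H}\rightarrow\mathbf{H_1}}\rightarrow 0$ requires is smallness \emph{uniform over the dual data}, which is exactly the subtlety you correctly flagged on the primal side and then asserted away on the dual side. There are two ways to repair it. (a) Observe that the dual solutions form a precompact set in the energy space: the data range over the unit ball of $L^{2}(\Omega)$ (for the Poisson problem behind $P_h^2$) resp. of $H^{-1}(\Omega)$ (for the fourth-order problem behind $P_h^1$), and these balls are compact in $(H_0^{1}(\Omega))^{*}$ resp. $(H_0^{2}(\Omega))^{*}$ because the embeddings $H_0^{2}\hookrightarrow H_0^{1}\hookrightarrow L^{2}$ are compact; since the auxiliary solution operators are bounded from the dual of the energy space, the solution families are precompact in $H_0^{2}(\Omega)$ resp. $H_0^{1}(\Omega)$, and your own $\epsilon$-net principle then gives the required uniformity. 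Note that this route needs no regularity and no smoothness of $n$ beyond $1/(n-1)\in L^{\infty}(\Omega)$, so your closing claim that $n\in W^{1,\infty}(\Omega)$ ``enters precisely in the duality step'' is misplaced. (b) Alternatively, invoke the regularity assumptions $(A1)$--$(A2)$, as the paper does later to obtain the quantitative bound $\|(u,w)-P_h(u,w)\|_{\mathbf{H_1}}\lesssim h^{r}\|(u,w)-P_h(u,w)\|_{\mathbf{H}}$ in Lemma 3.4; but those assumptions are not hypotheses of Theorem 3.1, so only route (a) proves the theorem as stated.
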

\begin{proof}
 Based on Lemma 2.1, using the method which is similar with the proof of Theorem 3.1 in \cite{13} we can easily get the conclusions above.
\end{proof}
\indent As usual, we suppose $\lambda=\lambda_k$ is the $k$th eigenvalue of (\ref{s2.3}) with the algebraic multiplicity $q$ and the ascent $\alpha$ ($\lambda_k=\lambda_{k+1}=\cdots =\lambda_{k+q-1}$). $\lambda^*=\lambda_k^{*}=\overline{\lambda_k}$ is an eigenvalue of (\ref{s2.7}). Eigenvalues $\lambda_{k,h}, \cdots, \lambda_{k+q-1,h}$ of (\ref{s2.11}) will converge to $ \lambda$ due to the Theorem 3.1.\\
\indent We define $E$ as the spectral projection associated with $T$ and $\lambda_k$, range $R(E)$ is the space of generalized eigenfunctions associated with $T$ and $\lambda$. Let $E_{h}$ be the spectral projection associated with $T_h$ and the eigenvalues $\lambda_{k,h}, \cdot\cdot\cdot, \lambda_{k+q-1,h}$ and let $M_h(\lambda)$ be the space of generalized eigenfunctions associated with $T_h$ and $\lambda_{k,h}, \cdot\cdot\cdot, \lambda_{k+q-1,h}$, it's obvious that the range $R(E_{h})=M_h(\lambda)$ if $h$ is small enough. On the dual problem (\ref{s2.7}) and its finite element approximate problem, the definitions of $E^{*}$, $R(E^{*})$, $E_{h}^{*}$, $M_{h}(\lambda^*)$ and $R(E_{h}^{*})$ are also analogous to the former.\\
\indent Given two closed subspaces $V$ and $U$, let
\begin{eqnarray*}
&&\delta(V,U)=\sup\limits_{(u,w)\in V\atop\|(u,w)\|_{\mathbf{H}}=1}\inf\limits_{(v,z)\in U}\|(u,w)-(v,z)\|_{\mathbf{H}},\\
&&\theta(V,U)_{1}=\sup\limits_{(u,w)\in
V\atop\|(u,w)\|_{\mathbf{H_{1}}}=1}\inf\limits_{(v,z)\in
U}\|(u,w)-(v,z)\|_{\mathbf{H_{1}}}.
\end{eqnarray*}
and define the gaps between $R(E)$ and $R(E_{h})$ in
$\|\cdot\|_{\mathbf{H}}$ by
\begin{eqnarray*}
 \widehat{\delta}(R(E),R(E_{h}))=\max\{\delta(R(E),R(E_{h})),
 \delta(R(E_{h}),R(E))\},
\end{eqnarray*}
and in $\|\cdot\|_{\mathbf{H}_{1}}$ by
\begin{eqnarray*}
 \widehat{\theta}(R(E),R(E_{h}))_{1}=\max\{\theta(R(E),R(E_{h}))_{1},
 \theta(R(E_{h}),R(E))_{1}\}.
\end{eqnarray*}
Define
\begin{eqnarray*}
&&\varepsilon_{h}(\lambda)=\sup\limits_{(u,\omega)\in
R(E)\atop\|(u,\omega)\|_{\mathbf{H}}=1} \inf\limits_{(v,z)\in
\mathbf{H}_{h}}\|(u,\omega)-(v,z)\|_{\mathbf{H}},\\
&&\varepsilon_{h}^{*}(\lambda^{*})=\sup\limits_{(u^{*},\omega^{*})\in
R(E^{*})\atop\|(u^{*},\omega^{*})\|_{\mathbf{H}}=1}
\inf\limits_{(v,z)\in
\mathbf{H}_{h}}\|(u^{*},\omega^{*})-(v,z)\|_{\mathbf{H}}.
\end{eqnarray*}
From $(C1)$ we know that
\begin{eqnarray*}
\varepsilon_{h}(\lambda)\to 0~(h\to
0),~~~\varepsilon_{h}^{*}(\lambda^{*})\to 0~(h\to 0).
\end{eqnarray*}
\indent The following theorem has been proved by \cite{8} with the condition of $n$ is smooth. Thanks to Theorem 3.1, we know the error estimates  also hold when $n \in L^\infty(\Omega)$.
\begin{theorem}
Suppose $n \in L^{\infty}(\Omega)$, then
\begin{eqnarray}\label{s3.3}
&&\widehat{\delta}(R(E),
R(E_{h}))\lesssim\varepsilon_{h}(\lambda)\\\label{s3.4}
&&|\lambda^{-1}-(\frac{1}{q}\sum\limits_{j=1}^{q}\lambda_{j,h})^{-1}|\lesssim
\varepsilon_{h}(\lambda)\varepsilon_{h}^{*}(\lambda^{*}),\\\label{s3.5}
&&|\lambda^{-1}-\lambda_{j,h}^{-1}|\lesssim
[\varepsilon_{h}(\lambda)\varepsilon_{h}^{*}(\lambda^{*})]^{\frac{1}{\alpha}}.
\end{eqnarray}
Let $(u_{h},w_{h})$ with $\|(u_{h},w_{h})\|_{A}=1$ is eigenfunction corresponding to
$\lambda_{j,h}$ ($j=1,2,\cdots,q$), then there exists eigenfunction
$(u,w)$ corresponding to $\lambda$, such that
\begin{eqnarray}\label{s3.6}
&&\|(u_{h},w_{h})-(u,w)\|_{\mathbf{H}}\lesssim
\varepsilon_{h}(\lambda)^{\frac{1}{\alpha}}.
\end{eqnarray}
\end{theorem}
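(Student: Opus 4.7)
The plan is to apply the abstract spectral-approximation framework for compact operators (as used in \cite{8}) to the pair $(T,T_h)$ acting on $\mathbf{H}$. The two prerequisites of that framework --- compactness of $T:\mathbf{H}\to\mathbf{H}$ from Lemma 2.1, and norm convergence $\|T-T_h\|_{\mathbf{H}}\to 0$ from Theorem 3.1 --- are both now available under the weakened hypothesis $n\in L^{\infty}(\Omega)$. The task therefore reduces to unwinding the abstract conclusions of that framework into the concrete bounds (\ref{s3.3})--(\ref{s3.6}), with all error contributions expressed through $\varepsilon_h(\lambda)$ and $\varepsilon_h^{*}(\lambda^{*})$.

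For the gap estimate (\ref{s3.3}), I would use the standard resolvent-perturbation inequality $\widehat{\delta}(R(E),R(E_h))\lesssim \|(T-T_h)|_{R(E)}\|_{\mathbf{H}}$, which is valid whenever $T_h\to T$ in operator norm and $R(E)$ is finite-dimensional. The Ritz identity $T_h=P_hT$ gives $(T-T_h)(u,w)=(I-P_h)T(u,w)$ for $(u,w)\in R(E)$; since $R(E)$ is $T$-invariant, $T(u,w)$ lies back in $R(E)$, and the best-approximation property of $P_h$ in the $A$-norm (equivalent to $\|\cdot\|_{\mathbf{H}}$) dominates $\|(I-P_h)T(u,w)\|_{\mathbf{H}}$ by $C\varepsilon_h(\lambda)\|(u,w)\|_{\mathbf{H}}$, yielding (\ref{s3.3}).

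For (\ref{s3.4}) I would invoke Osborn's averaged-eigenvalue identity, which expresses $\lambda^{-1}-\frac{1}{q}\sum_j\lambda_{j,h}^{-1}$, up to a scalar, as a sum of pairings $A((T-T_h)\phi_i,\phi_j^{*})$ over dual bases of $R(E)$ and $R(E^{*})$. Inserting $T_h=P_hT$ on the primal side and the analogous adjoint Ritz identity on the dual side factors each pairing into an $A$-inner product of $(I-P_h)T\phi_i$ with an approximation residual of $T^{*}\phi_j^{*}$, which then supplies the product $\varepsilon_h(\lambda)\,\varepsilon_h^{*}(\lambda^{*})$. The individual eigenvalue bound (\ref{s3.5}) follows by the usual Jordan-block argument, which forces $\alpha$-th roots when the ascent exceeds one, and (\ref{s3.6}) combines (\ref{s3.3}) with an analogous $1/\alpha$-power loss from the nilpotent part (trivially when $\alpha=1$).

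The main obstacle I foresee is not any individual estimate --- each is routine once Theorem 3.1 is in hand --- but rather verifying carefully that every step of the argument in \cite{8}, originally written under smoother assumptions on $n$, either uses only the two prerequisites above or can be replaced by the $L^{\infty}$-valid continuity bound for $B(\cdot,\cdot)$ derived in Section 2. Making sure that no hidden use of $n\in W^{1,\infty}(\Omega)$ slips in --- in particular when handling the adjoint operator $T^{*}$, its Galerkin approximation $T_h^{*}$, and the symmetry of the error factorization on the two sides --- is the substantive (if technical) content of the proof.
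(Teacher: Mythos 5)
Your proposal is correct and follows essentially the same route as the paper, which gives no detailed argument at all: it simply observes that the estimates were proved in \cite{8} (via the Babuska--Osborn spectral approximation theory applied to $T$, $T_h$ with $T_h=P_hT$) under smooth $n$, and that the only ingredients needed --- compactness of $T$ on $\mathbf{H}$ (Lemma 2.1) and $\|T-T_h\|_{\mathbf{H}}\to 0$ (Theorem 3.1) --- remain valid for $n\in L^{\infty}(\Omega)$. Your spelled-out version of the gap, averaged-eigenvalue, and ascent arguments is exactly what that citation compresses, so there is nothing to correct.
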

\indent The operator interpolation theory (see \cite{15}) tells us that the following condition $(C2)$ holds:\\
\indent $(C2)$ If $\psi\in H_{0}^{2}(\Omega)\cap H^{2+\sigma_{1}}(\Omega),(\sigma_{1}\in(0,2])$, then
\begin{eqnarray}
&&\inf\limits_{v\in X_{h}}\|\psi-v\|_{s}\lesssim h^{2+\sigma_{1}-s}\|\psi\|_{2+\sigma_{1}},~~~~~s=1,2\label{s3.7}
\end{eqnarray}
if $\varphi\in H_{0}^{1}(\Omega)\cap H^{1+\sigma_{2}}(\Omega)$,($\sigma_{2}\in(0,1]$ for bilinear Lagrange element, $\sigma_{2}\in(0,2]$ for biquadratic Lagrange element), then
\begin{eqnarray}
&&\inf\limits_{v\in Y_{h}}\|\varphi-v\|_{t}\lesssim h^{1+\sigma_{2}-t}\|\varphi\|_{1+\sigma_{2}},~~~~~t=0,1\label{s3.8}
\end{eqnarray}
\begin{corollary}
Suppose that $n \in L^{\infty}(\Omega), R(E), R(E^{*})\subset
\mathbf{H}\cap (H^{2+\sigma_{1}}(\Omega)\times H^{1+\sigma_{2}}(\Omega))$, and
(C2) is valid. Then (\ref{s3.3})-(\ref{s3.5}) hold with
\begin{eqnarray}
 \varepsilon_{h}(\lambda)\lesssim h^{\sigma},~~~\varepsilon_{h}^{*}(\lambda^{*})\lesssim h^{\sigma},~~~~~~\sigma=min\{\sigma_{1},\sigma_{2}\}.\label{s3.9}
\end{eqnarray}
\end{corollary}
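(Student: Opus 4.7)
The plan is a direct interpolation/approximation argument: reduce the supremum defining $\varepsilon_{h}(\lambda)$ (and $\varepsilon_{h}^{*}(\lambda^{*})$) to (C2), exploit finite-dimensionality of the generalized eigenspaces to eliminate the higher-order norms, and then quote Theorem 3.2 to turn these bounds into (\ref{s3.3})--(\ref{s3.5}).

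First, I fix an arbitrary $(u,w)\in R(E)$ with $\|(u,w)\|_{\mathbf{H}}=1$. By hypothesis $u\in H_{0}^{2}(\Omega)\cap H^{2+\sigma_{1}}(\Omega)$ and $w\in H_{0}^{1}(\Omega)\cap H^{1+\sigma_{2}}(\Omega)$, so (C2) produces $v\in X_{h}$ and $z\in Y_{h}$ with $\|u-v\|_{2}\lesssim h^{\sigma_{1}}\|u\|_{2+\sigma_{1}}$ and $\|w-z\|_{1}\lesssim h^{\sigma_{2}}\|w\|_{1+\sigma_{2}}$. Summing these two bounds and using the definition $\|(u,w)\|_{\mathbf{H}}=\|u\|_{2}+\|w\|_{1}$, together with $\sigma=\min\{\sigma_{1},\sigma_{2}\}$, gives
\begin{equation*}
\inf_{(v,z)\in\mathbf{H}_{h}}\|(u,w)-(v,z)\|_{\mathbf{H}}\lesssim h^{\sigma}\bigl(\|u\|_{2+\sigma_{1}}+\|w\|_{1+\sigma_{2}}\bigr).
\end{equation*}

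Second, I invoke the fact that $R(E)$ is finite-dimensional: by Lemma 2.1 the operator $T$ is compact on $\mathbf{H}$, so the spectral projector $E$ associated with the nonzero eigenvalue $\lambda$ has finite-dimensional range of dimension $q$. On the finite-dimensional subspace $R(E)\subset \mathbf{H}\cap(H^{2+\sigma_{1}}(\Omega)\times H^{1+\sigma_{2}}(\Omega))$ all norms are equivalent, so $\|u\|_{2+\sigma_{1}}+\|w\|_{1+\sigma_{2}}\lesssim \|(u,w)\|_{\mathbf{H}}=1$ with a constant depending only on $\lambda$. Taking the supremum over the unit sphere of $R(E)$ yields $\varepsilon_{h}(\lambda)\lesssim h^{\sigma}$. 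The identical argument applied to $R(E^{*})$, which is finite-dimensional with the same dimension $q$ by the duality relation between (\ref{s2.3}) and (\ref{s2.7}), gives $\varepsilon_{h}^{*}(\lambda^{*})\lesssim h^{\sigma}$.

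Substituting these two bounds into the abstract estimates (\ref{s3.3})--(\ref{s3.5}) of Theorem 3.2 finishes the proof. I do not anticipate any real obstacle: the only delicate point is the norm-equivalence step, and that is handled cleanly by the hypothesis $R(E),R(E^{*})\subset \mathbf{H}\cap(H^{2+\sigma_{1}}(\Omega)\times H^{1+\sigma_{2}}(\Omega))$ combined with finite-dimensionality. Note also that the requirement $n\in L^{\infty}(\Omega)$ (rather than $W^{1,\infty}(\Omega)$) is exactly what is needed so that Theorem 3.2 is applicable, since the gap and eigenvalue bounds there are stated in the $\mathbf{H}$-norm framework.
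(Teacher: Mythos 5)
Your argument is correct and is essentially the one the paper intends: the corollary is stated without a written proof precisely because it follows by applying (C2) componentwise on $R(E)$ and $R(E^{*})$ and inserting the resulting bounds $\varepsilon_{h}(\lambda),\varepsilon_{h}^{*}(\lambda^{*})\lesssim h^{\sigma}$ into Theorem 3.2. Your explicit use of finite-dimensionality of the generalized eigenspaces to bound $\|u\|_{2+\sigma_{1}}+\|w\|_{1+\sigma_{2}}$ uniformly on the $\mathbf{H}$-unit sphere is exactly the (implicit) step needed, so nothing is missing.
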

\indent  Next, referring to \cite{13}, we use the
Aubin-Nitsche technique to discuss the error estimates in norm
$\|\cdot\|_{\mathbf{H_{1}}}$. We need the following regularity
assumption :\\
\indent $(A1)$~For any $\xi\in H^{-1}(\Omega)$, there exists
$\psi\in H^{2+r_{1}}(\Omega)$ satisfying
\begin{eqnarray*}
&&\Delta(\frac{1}{n-1}\Delta \psi)=\xi,~~~in~\Omega,\\
&&\psi=\frac{\partial \psi}{\partial \nu}=0~~~ on~\partial \Omega,
\end{eqnarray*}
\begin{eqnarray}\label{s3.10}
&&\|\psi\|_{2+r_{1}}\leq C_p \|\xi\|_{-1},~(\|\xi\|_{-1}=\sup\limits_{0\neq v\in H_{0}^{1}(\Omega)}\frac{|(\xi,v)|}{\|v\|_{1}}).
\end{eqnarray}
where $r_{1}\in(0,1]$,  $C_p$ is the prior constant dependent on the equation and $\Omega$ but independent of the right-hand side $\xi$ of the equation.\\
\indent Consider the auxiliary boundary value problem with the assumption of $n\in W^{2,\infty}(\Omega)$ (if $\partial \Omega$ is a convex polygon, $r_{1}$ can reach the value 1 (see \cite{17})):
\begin{eqnarray}\label{s3.11}
&&\Delta(\frac{1}{n-1}\Delta \phi_{1})=-\Delta
(u-P_{h}^{1}u),~~~in~\Omega,\\\label{s3.12}
 &&\phi_{1}=\frac{\partial
\phi_{1}}{\partial \nu}=0,~~~ on~\partial \Omega.
\end{eqnarray}
Assume (A1) holds, then we can deduce that
\begin{eqnarray}
&&\|\phi_{1}\|_{2+r_{1}}\lesssim\|\Delta(u-P_{h}^{1})\|_{-1}=\sup\limits_{v\in H_{0}^{1}(\Omega)\atop \|v\|_{1}=1}|(\Delta(u-P_{h}^{1}u),v)|\nonumber\\
&&~~~=\sup\limits_{v\in H_{0}^{1}(\Omega)\atop \|v\|_{1}=1}|(\nabla(u-P_{h}^{1}v),\nabla v)|\lesssim\|u-P_{h}^{1}u\|_{1}\|v\|_{1}\nonumber\\
&&~~~\lesssim\|u-P_{h}^{1}u\|_{1}.\label{s3.13}
\end{eqnarray}
The weak form of (\ref{s3.11})-(\ref{s3.12}) is $$(\Delta v,\Delta
\phi_{1})_{n-1}=(\nabla v,\nabla (u-P_{h}^{1}u)),~~~~~\forall v\in
H_{0}^{2}(\Omega).$$ Let $v=u-P_{h}^{1}u$, take the definition
$P_{h}^{1}$, then
$$(\Delta(u-P_{h}^{1}u),\Delta(\phi_{1}-P_{h}^{1}\phi_{1}))_{n-1}
=(\nabla(u-P_{h}^{1}u),\nabla(u-P_{h}^{1}u)).$$
Using (\ref{s3.7}) and (\ref{s3.13}) we get
\begin{eqnarray*}
&&\|\nabla(u-P_{h}^{1}u)\|_{0}^{2}\lesssim
\|\Delta(u-P_{h}^{1}u)\|_{0}\|\Delta(\phi_{1}-P_{h}^{1}\phi_{1})\|_{0}\nonumber\\
&&~~~\lesssim\|\Delta(u-P_{h}^{1}u)\|_{0}\|\phi_{1}-P_{h}^{1}\phi_{1}\|_{2}
\lesssim\|\Delta(u-P_{h}^{1}u)\|_{0}h^{r_{1}}\|\phi_{1}\|_{2+r_{1}}\nonumber\\
&&~~~\lesssim
h^{r_{1}}\|\Delta(u-P_{h}^{1}u)\|_{0}\|u-P_{h}^{1}u\|_{1}.
\end{eqnarray*}
So
\begin{eqnarray}
&&\|u-P_{h}^{1}u\|_{1}\lesssim h^{r_{1}}\|u-P_{h}^{1}u\|_{2}.\label{s3.14}
\end{eqnarray}
\indent We also need the regularity assumption (see
\cite{18}):\\
\indent $(A2)$~ For any $f\in L^{2}(\Omega)$, there exist
$\varphi\in H^{1+r_{2}}$ which satisfies
\begin{eqnarray}
&&\Delta \varphi=f,~~~~~in~\Omega,\nonumber\\
&&\varphi=0,~~~~~on~\partial\Omega,\nonumber\\
&&\|\varphi\|_{1+r_{2}}\leq C_p\|f\|_{0},\label{s3.15}
\end{eqnarray}
where $r_{2}\in(0,1]$,  $C_p$ is the prior constant dependent on the equation and $\Omega$.\\
\indent The auxiliary boundary value problem is:
\begin{eqnarray}
&&\Delta \phi_{2}=w-P_{h}^{2}w,~~~in~\Omega,\label{s3.16}\\
&&\phi_{2}=0,~~~on~\partial\Omega.\label{s3.17}
\end{eqnarray}
Assume (A2) holds, then
\begin{eqnarray}
&&\|\phi_{2}\|_{1+r_{2}}\lesssim \|w-P_{h}^{2}w\|_{0}.\label{s3.18}
\end{eqnarray}
The associated weak form of (\ref{s3.16})-(\ref{s3.17}):
$$(\nabla \phi_{2},\nabla z)=(w-P_{h}^{2},z),~~~\forall z\in H_{0}^{1}(\Omega).$$
Let $z=w-P_{h}^{2}w$, and use the definition $P_{h}^{2}$ , then
$$(\nabla(\phi_{2}-P_{h}^{2}\phi_{2}),\nabla(w-P_{h}^{2}w))
=(w-P_{h}^{2},w-P_{h}^{2})$$
Combining (\ref{s3.8}) with (\ref{s3.18}), we deduce that
\begin{eqnarray*}
&&\|(w-P_{h}^{2}w)\|_{0}^{2}\lesssim
\|\nabla(w-P_{h}^{2}w)\|_{0}\|\nabla(\phi_{2}-P_{h}^{2}\phi_{2})\|_{0}\nonumber\\
&&~~~\lesssim\|(w-P_{h}^{2}w)\|_{1}\|\phi_{2}-P_{h}^{2}\phi_{2}\|_{1}
\lesssim\|w-P_{h}^{2}w\|_{1}h^{r_{2}}\|\phi_{2}\|_{1+r_{2}}\nonumber\\
&&~~~\lesssim h^{r_{2}}\|w-P_{h}^{2}w\|_{0}\|w-P_{h}^{2}w\|_{1}.
\end{eqnarray*}
So
\begin{eqnarray}
&&\|w-P_{h}^{2}w\|_{0}\lesssim h^{r_{2}}\|w-P_{h}^{2}w\|_{1}.\label{s3.19}
\end{eqnarray}
\indent Now we have the following lemma.
\begin{lemma}
Suppose that $(C1)$, $(C2)$, $(A1)$ and $(A2)$ are valid, then for $(u,w)\in\mathbf{H}$,
\begin{eqnarray}
&&\|(u,w)-P_{h}(u,w)\|_{\mathbf{H_{1}}}\lesssim h^{r}\|(u,w)-P_{h}(u,w)\|_{\mathbf{H}},\label{s3.20}
\end{eqnarray}
where $r=min\{r_{1},r_{2}\}$.
\end{lemma}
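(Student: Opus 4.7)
The plan is to recognize that Lemma 3.4 is essentially a bookkeeping assembly of the two Aubin-Nitsche duality estimates already produced just above the statement, namely inequality (\ref{s3.14}) for the $H_0^2$-projection $P_h^1$ and inequality (\ref{s3.19}) for the $H_0^1$-projection $P_h^2$. By the very definition $P_h(u,w)=(P_h^1u,P_h^2w)$ and by the definitions of the product norms,
\begin{eqnarray*}
\|(u,w)-P_h(u,w)\|_{\mathbf{H_1}} &=& \|u-P_h^1u\|_1+\|w-P_h^2w\|_0,\\
\|(u,w)-P_h(u,w)\|_{\mathbf{H}} &=& \|u-P_h^1u\|_2+\|w-P_h^2w\|_1,
\end{eqnarray*}
so the claim splits into two independent component-wise statements that coincide exactly with (\ref{s3.14}) and (\ref{s3.19}).

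Thus the proof I would give has three short steps. First, apply (\ref{s3.14}) (which used (A1), the Ritz property of $P_h^1$, and the approximation estimate (\ref{s3.7}) with $s=2$ applied to the auxiliary function $\phi_1$) to obtain $\|u-P_h^1u\|_1\lesssim h^{r_1}\|u-P_h^1u\|_2$. Second, apply (\ref{s3.19}) (derived analogously via (A2), the Ritz property of $P_h^2$, and (\ref{s3.8}) with $t=1$ applied to $\phi_2$) to obtain $\|w-P_h^2w\|_0\lesssim h^{r_2}\|w-P_h^2w\|_1$. Third, since $r=\min\{r_1,r_2\}$ and $h$ may be assumed bounded (say $h\le 1$), we have $h^{r_1}\le h^{r}$ and $h^{r_2}\le h^{r}$, so summing the two inequalities and bounding each component factor on the right by $\|u-P_h^1u\|_2+\|w-P_h^2w\|_1=\|(u,w)-P_h(u,w)\|_{\mathbf{H}}$ yields (\ref{s3.20}).

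There is really no obstacle here: the analytic work (duality argument, choice of test function $v=u-P_h^1u$ or $z=w-P_h^2w$, use of the regularity shifts (A1) and (A2), and application of (C2)) has already been carried out in the paragraphs preceding the lemma. The only mild caveat worth noting in the write-up is that (\ref{s3.14}) and (\ref{s3.19}) were stated for the specific functions generating the auxiliary problems (\ref{s3.11})--(\ref{s3.12}) and (\ref{s3.16})--(\ref{s3.17}), but the derivation uses nothing beyond $u\in H_0^2(\Omega)$ and $w\in H_0^1(\Omega)$, hence they remain valid for every $(u,w)\in\mathbf{H}$, which is exactly the hypothesis of the lemma. The proof therefore reduces to citing these two inequalities and collecting terms with the exponent $r=\min\{r_1,r_2\}$.
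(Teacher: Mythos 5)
Your proposal is correct and matches the paper's own proof, which likewise just combines (\ref{s3.14}) and (\ref{s3.19}) component-wise, uses $h^{r_1},h^{r_2}\lesssim h^{r}$, and sums to obtain (\ref{s3.20}). Your remarks on why (\ref{s3.14}) and (\ref{s3.19}) apply to arbitrary $(u,w)\in\mathbf{H}$ are consistent with how the paper uses them.
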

\begin{proof}
Combining (\ref{s3.14}) with (\ref{s3.19}),we know
\begin{eqnarray*}
&&\|(u,w)-P_{h}(u,w)\|_{\mathbf{H_{1}}}
=\|(u-P_{h}^{1}u,w-P_{h}^{2}w)\|_{\mathbf{H_{1}}}\\
&&~~~~~~=\|u-P_{h}^{1}u\|_{1}+\|w-P_{h}^{2}w\|_{0}\lesssim h^{r_{1}}\|u-P_{h}^{1}u\|_{2}
+h^{r_{2}}\|w-P_{h}^{2}w\|_{1}\\
&&~~~~~~\lesssim h^{r}(\|u-P_{h}^{1}u\|_{2}+\|w-P_{h}^{2}w\|_{1})
=h^{r}\|(u,w)-P_{h}(u,w)\|_{\mathbf{H}}.
\end{eqnarray*}
The proof is complete.
\end{proof}
\begin{theorem}
Suppose that $(C1)$, $(C2)$, $(A1)$ and $(A2)$ are valid, and $n\in W^{2,\infty}(\Omega),r\in(0,1]$. Then
\begin{eqnarray}\label{s3.21}
\widehat{\theta}(R(E), R(E_{h}))_{1}
 \lesssim h^{r}\varepsilon_{h}(\lambda).
\end{eqnarray}
Let $\lambda_{h}$ be the eigenvalue of (\ref{s2.11}) which converges to $\lambda$ ,and let $(u_{h},w_{h})$ with $\|(u_{h},w_{h})\|_{A}=1$ be an eigenfunction associated with $\lambda_{h}$ ,then there exists eigenfunction $(u,w)$ corresponding to $\lambda$, such that
\begin{eqnarray}\label{s3.22}
\|(u_{h},\omega_{h})-(u,\omega)\|_{\mathbf{H}_{1}}\lesssim
(h^{r}\varepsilon_{h}(\lambda))^{\frac{1}{\alpha}}.
\end{eqnarray}
\end{theorem}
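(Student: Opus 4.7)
The plan is to apply the Babu\v{s}ka--Osborn spectral approximation framework in the weaker norm $\|\cdot\|_{\mathbf{H}_1}$ (which is legitimate because $T:\mathbf{H}_1\to\mathbf{H}_1$ is compact by Lemma 2.1) and to reduce both estimates to a single operator-error bound $\|(T-T_h)|_{R(E)}\|_{\mathbf{H}_1}\lesssim h^r\varepsilon_h(\lambda)$. This follows the pattern of the proof in \cite{13} but adapted to the mixed space $\mathbf{H}=H_0^2(\Omega)\times H_0^1(\Omega)$ and the present definition of the Ritz projection $P_h$.

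First I would establish the operator-error bound using the identity $T_h=P_hT$ from (\ref{s2.19}), which gives $(T-T_h)(u,w)=(I-P_h)T(u,w)$. Taking $(u,w)\in R(E)$ with $\|(u,w)\|_{\mathbf{H}_1}=1$, and using that $R(E)$ is finite-dimensional and $T$-invariant so all norms on it are equivalent, one gets $T(u,w)\in R(E)$ with $\|T(u,w)\|_{\mathbf{H}}\lesssim 1$. Applying Lemma 3.4 to $T(u,w)$ then produces
$$\|(T-T_h)(u,w)\|_{\mathbf{H}_1}=\|(I-P_h)T(u,w)\|_{\mathbf{H}_1}\lesssim h^r\|(I-P_h)T(u,w)\|_{\mathbf{H}}\lesssim h^r\varepsilon_h(\lambda),$$
where the last inequality uses $T(u,w)\in R(E)$ together with the definition of $\varepsilon_h(\lambda)$ and norm equivalence on $R(E)$.

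The gap bound (\ref{s3.21}) then follows from the standard spectral inequality $\|(E-E_h)|_{R(E)}\|_{\mathbf{H}_1}\lesssim \|(T-T_h)|_{R(E)}\|_{\mathbf{H}_1}$, which is a consequence of the resolvent representation of $E_h$ combined with $T_h\to T$ in $\|\cdot\|_{\mathbf{H}_1}$ from (\ref{s3.2}). Since $E$ acts as the identity on $R(E)$, choosing $E_h(u,w)\in R(E_h)$ as the approximant of $(u,w)\in R(E)$ yields $\theta(R(E),R(E_h))_1\lesssim h^r\varepsilon_h(\lambda)$; the symmetric estimate $\theta(R(E_h),R(E))_1\lesssim h^r\varepsilon_h(\lambda)$ is obtained by reversing the roles, using that $\delta(R(E_h),R(E))\lesssim\varepsilon_h(\lambda)$ from Theorem 3.2 together with Lemma 3.4 applied to elements of $R(E_h)$. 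For the eigenfunction bound (\ref{s3.22}), the ascent $\alpha$ enters in the usual way via the Jordan-block structure of $T$ restricted to $R(E)$: an individual eigenfunction (as opposed to the whole invariant subspace) can only be approximated with error $O(\|(T-T_h)|_{R(E)}\|^{1/\alpha})$, giving $\|(u_h,w_h)-(u,w)\|_{\mathbf{H}_1}\lesssim (h^r\varepsilon_h(\lambda))^{1/\alpha}$.

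The main obstacle is verifying that the Babu\v{s}ka--Osborn spectral perturbation machinery -- traditionally stated in a single Hilbert space with a compact operator -- remains valid when the ambient norm is the weaker $\|\cdot\|_{\mathbf{H}_1}$ rather than $\|\cdot\|_{\mathbf{H}}$ in which $P_h$ is naturally defined. The key ingredients making this work are: Lemma 2.1 giving compactness of $T:\mathbf{H}_1\to\mathbf{H}_1$ under $n\in W^{1,\infty}(\Omega)$; Theorem 3.1 giving $\|T-T_h\|_{\mathbf{H}_1}\to 0$; and the stronger regularity $n\in W^{2,\infty}(\Omega)$ required to justify the Aubin--Nitsche duality behind Lemma 3.4. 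Once these are in hand, the proof is essentially a routine translation of the $\mathbf{H}$-norm argument (Theorem 3.2) into the $\mathbf{H}_1$-norm setting, with Lemma 3.4 supplying the extra factor $h^r$ at each application.
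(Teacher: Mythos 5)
Your proposal is correct and follows essentially the same route as the paper: the central bound $\|(T-T_{h})|_{R(E)}\|_{\mathbf{H}_{1}}\lesssim h^{r}\varepsilon_{h}(\lambda)$ is obtained exactly as in the paper via $T_{h}=P_{h}T$, the invariance $TR(E)\subset R(E)$, norm equivalence on the finite-dimensional space $R(E)$, and Lemma 3.4, after which the gap estimate (\ref{s3.21}) and the eigenfunction estimate (\ref{s3.22}) with exponent $1/\alpha$ are read off from the Babu\v{s}ka--Osborn theory (Theorems 7.1 and 7.4 of \cite{14}) applied in the $\|\cdot\|_{\mathbf{H}_{1}}$ setting, which is justified, as you note, by Lemma 2.1 and Theorem 3.1. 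One minor slip: your sketch of the reverse gap direction via ``Lemma 3.4 applied to elements of $R(E_{h})$'' is vacuous since $P_{h}$ is the identity on $\mathbf{H}_{h}\supset R(E_{h})$, but that direction is in any case covered by the cited spectral inequality bounding $\widehat{\theta}(R(E),R(E_{h}))_{1}$ by $\|(T-T_{h})|_{R(E)}\|_{\mathbf{H}_{1}}$, which is precisely how the paper argues.
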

\begin{proof}
In $R(E)$ the norm $\|\cdot\|_{\mathbf{H}}$ is equivalent to the norm $\|\cdot\|_{\mathbf{H_{1}}}$, and $TR(E)\subset R(E)$, referring to (\ref{s3.20}) we know that
\begin{eqnarray*}
&& \|(T-T_{h})|_{R(E)}\|_{\mathbf{H}_{1}}=
\sup\limits_{(u,w)\in R(E)}
\frac{\|T(u,w)-T_{h}(u,w)\|_{\mathbf{H}_{1}}}{\|(u,w)\|_{\mathbf{H}_{1}}}\\
&&~~~\lesssim \sup\limits_{(u,w)\in R(E)}
\frac{\|T(u,w)-P_{h}T(u,w)\|_{\mathbf{H}_{1}}}{\|(u,w)\|_{\mathbf{H}}}\\
&&~~~\lesssim h^{r}\sup\limits_{(u,w)\in R(E)}
\frac{\|T(u,w)-P_{h}T(u,w)\|_{\mathbf{H}}}{\|(u,w)\|_{\mathbf{H}}}\\
&&~~~=h^{r}\sup\limits_{(u,w)\in R(E)}
\frac{\|T(u,w)-P_{h}T(u,w)\|_{\mathbf{H}}}{\|T(u,w)\|_{\mathbf{H}}}
\frac{\|T(u,w)\|_{\mathbf{H}}}{\|(u,w)\|_{\mathbf{H}}}\\
&&~~~ \lesssim h^{r}\varepsilon_{h}(\lambda).
\end{eqnarray*}
Combining Theorem 3.1 with Theorem 7.1 in \cite{14}, we get
\begin{eqnarray*}
\widehat{\theta}(R(E), R(E_{h}))_{1}\lesssim
\|(T-T_{h})|_{R(E)}\|_{\mathbf{H}_{1}}\lesssim
h^{r}\varepsilon_{h}(\lambda).
\end{eqnarray*}
And from Theorem 7.4 in \cite{14} we have
\begin{eqnarray*}
\|u_{h}-u\|_{\mathbf{H}_{1}}\lesssim
\|(T-T_{h})|_{R(E)}\|_{\mathbf{H}_{1}}^{\frac{1}{\alpha}}\lesssim
(h^{r}\varepsilon_{h}(\lambda))^{\frac{1}{\alpha}}.
\end{eqnarray*}
So we complete the proof.
\end{proof}
\indent If we take the same method in this section, we can deduce the error estimates of finite element approximation for the dual problem (\ref{s2.7}), the following results can be easily get
\begin{eqnarray}\label{s3.23}
&&\|(u_{h}^{*},\omega_{h}^{*})-(u^{*},\omega^{*})\|_{\mathbf{H}}\lesssim
\varepsilon_{h}^{*}(\lambda^{*})^{\frac{1}{\alpha}},\\\label{s3.24}
&&\|(u_{h}^{*},\omega_{h}^{*})-(u^{*},\omega^{*})\|_{\mathbf{H}_{1}}\lesssim
(h^{r}\varepsilon_{h}^{*}(\lambda^{*}))^{\frac{1}{\alpha}}.
\end{eqnarray}

\section{A mixed-element two-grid discretization}
\indent Now we use the two-grid discretization to deal with the transmission eigenvalues problem and consider its error estimates.\\
\indent{\bf Definition 4.1.}~~ $\forall$ $(v,z),(v^{*},z^{*})\in \mathbf{H}
$, $B((v,z),(v^{*},z^{*}))\neq 0$, define
\begin{eqnarray*}
\frac{A((v,z),(v^{*},z^{*}))}{B((v,z),(v^{*},z^{*}))}
\end{eqnarray*}
as the generalized Rayleigh quotient of $(v,z)$ and $(v^{*},z^{*})$.\\
\indent Now take three steps to achieve the two-grid
discretization.\\ \indent {\bf Step 1.}~ Solve (\ref{s2.11}) on a
coarse grid $\pi_{H}$: Find $\lambda_{H}\in \mathbb{C},
(u_{H},w_{H})\in \mathbf{H}_{H}$ such that
$\|(u_{H},w_{H})\|_{\mathbf{A}}=1$ and
\begin{eqnarray*}
\lambda_{H}A((u_{H},w_{H}),(v,z)) =B((u_{H},w_{H}),(v,z)),~~~\forall (v,z)\in \mathbf{H}_{H},
\end{eqnarray*}
and find $(u_{H}^{*}, w_{H}^{*})\in R(E_{H}^*)$ with
$\|(u_{H}^{*},w_{H}^{*})\|_{\mathbf{A}}=1$ such that
$|B((u_{H},w_{H}), (u_{H}^{*},w_{H}^{*}))|$ has a positive
lower bound uniformly with respect to $H$.\\
\indent {\bf Step 2.} Solve two linear boundary value problems on a
fine grid $\pi_{h}$: Find $(u^h,w^{h})\in \mathbf{H}_{h}$ such
that
\begin{eqnarray*}
\lambda_{H}A((u^h,w^{h}),
(v,z))=B((u_{H},w_{H}),(v,z)),~~~\forall (v,z)\in
\mathbf{H}_{h},
\end{eqnarray*}
and find $(u^{h*},w^{h*})\in \mathbf{H}_{h}$ such that
\begin{eqnarray*}
\lambda_{H}A((v,z), (u^{h*},w^{h*}))=B((v,z),(u_{H}^{*},w_{H}^{*})),~~~\forall (v,z)\in
\mathbf{H}_{h}.
\end{eqnarray*}
\indent {\bf Step 3.} Compute the generalized Rayleigh quotient
\begin{eqnarray*}
 (\lambda^{h})^{-1}=\frac{A((u^h,w^{h}),(u^{h*},w^{h*}))}{ B((u^h,w^{h}),(u^{h*},w^{h*}))}.
\end{eqnarray*}

\indent Thanks to \cite{13} we get the following lemma.
\begin{lemma}
 Let
$(u_{H}^{-},w_{H}^{-})$ be the orthogonal projection of
$(u_{H},w_{H})$ to $R(E_H^{*})$ in the sense of inner product
$A(\cdot,\cdot)$, and let
\begin{eqnarray}\label{s4.1}
(u_{H}^{*},w_{H}^{*})=\frac{(u_{H}^{-},w_{H}^{-})}{\|(u_{H}^{-},w_{H}^{-})\|_{A}}.
\end{eqnarray}
Then $|B((u_{H},w_{H}), (u_{H}^{*},w_{H}^{*}))|$ has a positive
lower bound uniformly with respect to $H$.
\end{lemma}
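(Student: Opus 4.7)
The plan is to express $B((u_H,w_H),(u_H^*,w_H^*))$ in closed form via the eigenvalue equation for $(u_H,w_H)$, and then bound its modulus from below via a non-degeneracy argument for the $A$-pairing between the continuous eigenspaces $R(E)$ and $R(E^*)$, transferred to the discrete level by Theorem 3.2.

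First I would derive the key identity. Since $(u_H,w_H)$ is a genuine eigenfunction of $T_H$ for the eigenvalue $\lambda_H$ and $(u_H^*,w_H^*)\in \mathbf{H}_H$, taking $(v,z)=(u_H^*,w_H^*)$ in (\ref{s2.13}) yields
\begin{equation*}
B((u_H,w_H),(u_H^*,w_H^*))=A(T_H(u_H,w_H),(u_H^*,w_H^*))=\lambda_H\,A((u_H,w_H),(u_H^*,w_H^*)).
\end{equation*}
Because $(u_H^-,w_H^-)$ is the $A$-orthogonal projection of $(u_H,w_H)$ onto $R(E_H^*)$ and $(u_H^*,w_H^*)$ is its $A$-unit rescaling, one computes $A((u_H,w_H),(u_H^*,w_H^*))=A((u_H^-,w_H^-),(u_H^*,w_H^*))=\|(u_H^-,w_H^-)\|_A$. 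Hence
\begin{equation*}
|B((u_H,w_H),(u_H^*,w_H^*))|=|\lambda_H|\,\|(u_H^-,w_H^-)\|_A,
\end{equation*}
and since $|\lambda_H|\to|\lambda|>0$ it suffices to produce a uniform positive lower bound for $\|(u_H^-,w_H^-)\|_A$.

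For the continuous problem, I would show that the sesquilinear pairing $A(\cdot,\cdot)\colon R(E)\times R(E^*)\to\mathbb{C}$ is non-degenerate. Relation (\ref{s2.10}) says the spectral projections satisfy $A(E\xi,\eta)=A(\xi,E^*\eta)$, so any $\xi\in R(E)$ that is $A$-orthogonal to $R(E^*)$ would give $A(\xi,\eta)=A(\xi,E^*\eta)=0$ for every $\eta\in\mathbf{H}$, forcing $\xi=0$. Hence the $A$-orthogonal projector from $R(E)$ into $R(E^*)$ is an injective linear map between two $q$-dimensional spaces, and compactness of the unit $A$-sphere in $R(E)$ supplies a constant $c_0>0$ such that every unit vector of $R(E)$ projects onto a vector of $R(E^*)$ of $A$-norm at least $c_0$.

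Finally, I would transfer this estimate to the discrete spaces. Theorem 3.2 applied to the primal and dual problems gives $\widehat{\delta}(R(E),R(E_H))\to 0$ and $\widehat{\delta}(R(E^*),R(E_H^*))\to 0$. A standard gap-perturbation argument then shows that the $A$-orthogonal projector onto $R(E_H^*)$, evaluated at the unit vector $(u_H,w_H)\in R(E_H)$, is within $o(1)$ (in $A$-norm) of the projector onto $R(E^*)$ evaluated at a nearby unit vector of $R(E)$. Combined with the continuous lower bound this yields $\|(u_H^-,w_H^-)\|_A\geq c_0/2$ for $H$ sufficiently small, so $|B((u_H,w_H),(u_H^*,w_H^*))|\geq |\lambda|\,c_0/4$ uniformly in $H$. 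The main obstacle is the non-degeneracy step: although it is classical spectral theory, one must argue it using only the $A$-adjointness (\ref{s2.10}) and the equality of algebraic multiplicities of $\lambda$ and $\lambda^*=\overline{\lambda}$, without invoking diagonalizability, since the ascent $\alpha$ may exceed $1$.
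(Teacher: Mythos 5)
Your proposal is correct, and it is essentially the standard argument: the reduction $|B((u_{H},w_{H}),(u_{H}^{*},w_{H}^{*}))|=|\lambda_{H}|\,\|(u_{H}^{-},w_{H}^{-})\|_{A}$ via (\ref{s2.11}), the non-degeneracy of the $A$-pairing between $R(E)$ and $R(E^{*})$ through $A(E\xi,\eta)=A(\xi,E^{*}\eta)$ together with $E\xi=\xi$ and finite-dimensional compactness, and the transfer to $R(E_{H})$, $R(E_{H}^{*})$ by the gap convergence of Theorem 3.2 and its dual analogue all go through. The paper itself gives no proof, deferring to \cite{13}, and your write-up matches that standard route; the only cosmetic points are that the adjoint identity for the spectral projections follows from (\ref{s2.10}) via the Dunford integral rather than being literally stated there, and that in the non-degeneracy chain one should write $A(\xi,\eta)=A(E\xi,\eta)=A(\xi,E^{*}\eta)$ explicitly.
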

And referring to \cite{19,13,14}, we have the lemma as follow:
\begin{lemma}
Let $(\lambda,u,w)$ and $(\lambda^{*},u^{*},w^{*})$ be the
eigenpair of (\ref{s2.3}) and (\ref{s2.7}),
respectively. Then,
$\forall (v,z),(v^{*},z^{*})\in \mathbf{H}$, $B((v,z),
(v^{*},z^{*}))\not=0$, the generalized Rayleigh quotient satisfies
\begin{eqnarray}\label{s4.2}
&&\frac{A((v,z),(v^{*},z^{*}))}{B((v,z),(v^{*},z^{*}))}-\lambda^{-1}
=\frac{A((v,z)-(u,w),(v^{*},z^{*})-(u^{*},w^{*}))}{B((v,z),(v^{*},z^{*}))}
\nonumber\\
&&~~~~~~-\lambda^{-1}
\frac{B((v,z)-(u,w),(v^{*},z^{*})-(u^{*},w^{*}))}{B((v,z),(v^{*},z^{*}))}.
\end{eqnarray}
\end{lemma}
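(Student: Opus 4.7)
The plan is to verify (\ref{s4.2}) by direct algebraic manipulation, exploiting the sesquilinearity of $A$ and $B$ together with the fact that $(u,w)$ and $(u^{*},w^{*})$ solve the primal and dual eigenvalue equations (\ref{s2.3}) and (\ref{s2.7}). Since the primal and dual eigenvalues are related by $\lambda=\overline{\lambda^{*}}$, both yield the useful identity $A(\cdot,\cdot)=\lambda^{-1}B(\cdot,\cdot)$ whenever one of the slots is filled by the appropriate eigenfunction.

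First I would expand, using sesquilinearity, the numerator
$A((v,z)-(u,w),(v^{*},z^{*})-(u^{*},w^{*}))$
into the four terms
$A((v,z),(v^{*},z^{*}))-A((u,w),(v^{*},z^{*}))-A((v,z),(u^{*},w^{*}))+A((u,w),(u^{*},w^{*}))$,
and similarly for $B((v,z)-(u,w),(v^{*},z^{*})-(u^{*},w^{*}))$. Next, I would substitute in the three eigenvalue relations
$A((u,w),(v^{*},z^{*}))=\lambda^{-1}B((u,w),(v^{*},z^{*}))$ (from the primal equation with test $(v^{*},z^{*})$),
$A((v,z),(u^{*},w^{*}))=\lambda^{-1}B((v,z),(u^{*},w^{*}))$ (from the dual equation combined with $\lambda=\overline{\lambda^{*}}$), and
$A((u,w),(u^{*},w^{*}))=\lambda^{-1}B((u,w),(u^{*},w^{*}))$.

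Inserting these three substitutions into the expanded $A$-numerator makes the corresponding three contributions in the expanded $\lambda^{-1}B$-numerator cancel exactly when one forms the right-hand side of (\ref{s4.2}). What survives is simply $A((v,z),(v^{*},z^{*}))-\lambda^{-1}B((v,z),(v^{*},z^{*}))$, and dividing through by the nonvanishing $B((v,z),(v^{*},z^{*}))$ produces the left-hand side.

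There is no genuine analytic obstacle here; the proof is pure bookkeeping. The one point that demands care is the conjugate-linear second argument: one must use the dual eigenvalue problem in exactly the form (\ref{s2.7}), read off that $\overline{\lambda^{*}}=\lambda$, and thereby convert $A((v,z),(u^{*},w^{*}))=\overline{\lambda^{*}}^{\,-1}B((v,z),(u^{*},w^{*}))$ into the $\lambda^{-1}B$ form needed for the cancellation. Once the three substitutions are lined up correctly, (\ref{s4.2}) is immediate.
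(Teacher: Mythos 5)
Your proof is correct. The paper itself gives no argument for this lemma, simply citing Kolman and Yang--Bi--Han for it, and your direct verification is exactly the standard computation those sources use: expand both numerators on the right of (\ref{s4.2}) by (conjugate-)additivity, substitute $A((u,w),(v^{*},z^{*}))=\lambda^{-1}B((u,w),(v^{*},z^{*}))$, $A((v,z),(u^{*},w^{*}))=\lambda^{-1}B((v,z),(u^{*},w^{*}))$ (via $\lambda=\overline{\lambda^{*}}$) and $A((u,w),(u^{*},w^{*}))=\lambda^{-1}B((u,w),(u^{*},w^{*}))$, after which everything cancels except $A((v,z),(v^{*},z^{*}))-\lambda^{-1}B((v,z),(v^{*},z^{*}))$, and division by the nonzero $B((v,z),(v^{*},z^{*}))$ gives the claim; your attention to the conjugate-linear second slot and to using the dual problem (\ref{s2.7}) in the form $\overline{\lambda^{*}}=\lambda$ is precisely the only delicate point.
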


\begin{theorem}
Let $\lambda_{H}, (u_{H},w_{H}), (u_{H}^{*},w_{H}^{*}),
\lambda^{h}, (u^{h},w^{h}), (u^{h*},w^{h*})$ be the numerical eigenpairs obtained by the procedure of two-grid discretization and let $\lambda$ be the eigenvalue of (\ref{s2.3}) which is approximated by $\lambda_{H}$. Assume that the ascents of both $\lambda$ and $\lambda_H$ are equal to $1$, and $(C1)$, $(C2)$, $(A1)$, $(A2)$ are valid, $n \in W^{2,\infty}(\Omega)$. Then there exists  $(u,w)\in R(E)$ and
 $(u^{*},w^{*})\in R(E^{*})$ such that when $H$ is
properly small there hold
\begin{eqnarray}\label{s4.3}
&&\|(u^{h},w^{h})-(u,w)\|_{\mathbf{H}}\lesssim
H^{r}\varepsilon_{H}(\lambda)
+\varepsilon_{h}(\lambda),\\\label{s4.4}
&&\|(u^{h*},w^{h*})-(u^{*},w^{*})\|_{\mathbf{H}}\lesssim
H^{r}\varepsilon_{H}^{*}(\lambda^{*})
+\varepsilon_{h}^{*}(\lambda),\\\label{s4.5}
&& |(\lambda^{h})^{-1}-\lambda^{-1} | \lesssim
\{H^{r}\varepsilon_{H}(\lambda) +\varepsilon_{h}(\lambda)\}
\{H^{r}\varepsilon_{H}^{*}(\lambda^{*})
+\varepsilon_{h}^{*}(\lambda^{*})\},
\end{eqnarray}
where $r\in(0,1]$.
\end{theorem}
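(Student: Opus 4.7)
The plan is to exploit the fact that Step~2 of the algorithm produces $(u^h,w^h)=\lambda_H^{-1}T_h(u_H,w_H)$ and $(u^{h*},w^{h*})=\lambda_H^{-1}T_h^{*}(u_H^{*},w_H^{*})$, so the two-grid errors can be compared with the ideal iterate $\lambda^{-1}T(u,w)=(u,w)$ via an operator decomposition. First I would use Theorem~3.4 with ascent $\alpha=1$ to select $(u,w)\in R(E)$ and $(u^{*},w^{*})\in R(E^{*})$ satisfying $\|(u_H,w_H)-(u,w)\|_{\mathbf{H_{1}}}\lesssim H^{r}\varepsilon_H(\lambda)$ and $\|(u_H^{*},w_H^{*})-(u^{*},w^{*})\|_{\mathbf{H_{1}}}\lesssim H^{r}\varepsilon_H^{*}(\lambda^{*})$.

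For the eigenfunction bound (\ref{s4.3}), I would write
\begin{equation*}
(u^h,w^h)-(u,w)=\lambda_H^{-1}T_h\bigl[(u_H,w_H)-(u,w)\bigr]+(\lambda_H^{-1}-\lambda^{-1})T_h(u,w)+\lambda^{-1}(T_h-T)(u,w),
\end{equation*}
exploiting $(u,w)=\lambda^{-1}T(u,w)$. The first term is controlled by $H^{r}\varepsilon_H(\lambda)$ once one records that $T_h:\mathbf{H_{1}}\to\mathbf{H}$ is bounded, which follows by taking the test function $T_h(f,g)\in\mathbf{H_h}$ in (\ref{s2.13}) and combining (\ref{s2.2}) with the coercivity of $A$ on $\mathbf{H_h}$. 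The second term is of size $|\lambda_H^{-1}-\lambda^{-1}|\lesssim\varepsilon_H(\lambda)\varepsilon_H^{*}(\lambda^{*})$ by (\ref{s3.4}) and is absorbed as a higher-order contribution to $H^{r}\varepsilon_H(\lambda)$. The third term collapses via $T_h=P_hT$ from (\ref{s2.19}) and $T(u,w)=\lambda(u,w)$ to $\lambda(P_h-I)(u,w)$, whose $\mathbf{H}$-norm is bounded by $\varepsilon_h(\lambda)$. Summing the three bounds gives (\ref{s4.3}); the dual estimate (\ref{s4.4}) is obtained by an identical argument applied to $T^{*}$ and $T_h^{*}$.

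For the eigenvalue bound (\ref{s4.5}), I would apply the generalised Rayleigh-quotient identity of Lemma~4.2 with $(v,z)=(u^h,w^h)$ and $(v^{*},z^{*})=(u^{h*},w^{h*})$, so that
\begin{equation*}
(\lambda^{h})^{-1}-\lambda^{-1}=\frac{A\bigl((u^h,w^h)-(u,w),(u^{h*},w^{h*})-(u^{*},w^{*})\bigr)-\lambda^{-1}B\bigl((u^h,w^h)-(u,w),(u^{h*},w^{h*})-(u^{*},w^{*})\bigr)}{B((u^h,w^h),(u^{h*},w^{h*}))}.
\end{equation*}
Continuity of $A$ and the bound (\ref{s2.23}) on $B$ majorise the numerator by $\|(u^h,w^h)-(u,w)\|_{\mathbf{H}}\,\|(u^{h*},w^{h*})-(u^{*},w^{*})\|_{\mathbf{H}}$, which by (\ref{s4.3}) and (\ref{s4.4}) is precisely $\{H^{r}\varepsilon_H(\lambda)+\varepsilon_h(\lambda)\}\{H^{r}\varepsilon_H^{*}(\lambda^{*})+\varepsilon_h^{*}(\lambda^{*})\}$.

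The main obstacle is securing a uniform positive lower bound on $|B((u^h,w^h),(u^{h*},w^{h*}))|$. Lemma~4.1 delivers this only on the coarse grid for the pair $(u_H,w_H)$, $(u_H^{*},w_H^{*})$; to transfer it to the fine-grid iterates I would use that $(u^h,w^h)-(u_H,w_H)$ and $(u^{h*},w^{h*})-(u_H^{*},w_H^{*})$ are small in $\mathbf{H_{1}}$ (a by-product of (\ref{s4.3})--(\ref{s4.4}) together with Theorem~3.4 applied at the coarse level), and then invoke the continuity (\ref{s2.2})—valid because $n\in W^{2,\infty}(\Omega)\subset W^{1,\infty}(\Omega)$—to show that $B$ evaluated at the fine-grid pair differs from its coarse-grid value by an arbitrarily small perturbation once $H$ is small enough. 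This is the single place where the full regularity hypothesis on $n$ is genuinely used, and it is also the bridge that allows the $\mathbf{H}$-norm analysis of eigenfunctions to feed into the Rayleigh-quotient estimate.
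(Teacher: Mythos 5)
Your overall strategy is the paper's: identify $(u^{h},w^{h})=\lambda_{H}^{-1}T_{h}(u_{H},w_{H})$, use the boundedness of $T_{h}:\mathbf{H_{1}}\rightarrow\mathbf{H}$ together with the coarse-level $\mathbf{H_{1}}$ estimate (this is Theorem 3.5, i.e.\ (\ref{s3.22}) at level $H$, not ``Theorem 3.4'') and the identity $T_{h}=P_{h}T$ to get (\ref{s4.3})--(\ref{s4.4}), then feed these into the Rayleigh-quotient identity of Lemma 4.2 and transfer the coarse-grid lower bound of Lemma 4.1 on $|B|$ to the fine-grid pair by a smallness/continuity argument. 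The Rayleigh-quotient part and the denominator perturbation are exactly what the paper does (it inserts $B((u,w),(u^{*},w^{*}))$ and $B((u_{H},w_{H}),(u_{H}^{*},w_{H}^{*}))$ and argues the differences vanish as $H\to0$); two small corrections there: the bilinear bound you need for the numerator and the perturbation is the $W^{1,\infty}$ estimate (\ref{s2.2}) rather than (\ref{s2.23}), and $n\in W^{2,\infty}(\Omega)$ is not used only at that point --- it already enters through (A1), Lemma 3.4 and Theorem 3.5, and $W^{1,\infty}$ regularity is what makes $T_{h}:\mathbf{H_{1}}\rightarrow\mathbf{H}$ bounded in your first term.

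The one genuine gap is the middle term of your three-term decomposition, $(\lambda_{H}^{-1}-\lambda^{-1})T_{h}(u,w)$. Its size is $|\lambda_{H}^{-1}-\lambda^{-1}|\lesssim\varepsilon_{H}(\lambda)\varepsilon_{H}^{*}(\lambda^{*})$ (this is (\ref{s3.5}) with $\alpha=1$; (\ref{s3.4}) concerns the averaged eigenvalue), and you claim it can be ``absorbed as a higher-order contribution to $H^{r}\varepsilon_{H}(\lambda)$''. That absorption requires $\varepsilon_{H}^{*}(\lambda^{*})\lesssim H^{r}$, which is not among the hypotheses of the theorem: (C1)--(C2), (A1)--(A2) and $n\in W^{2,\infty}(\Omega)$ only guarantee $\varepsilon_{H}^{*}(\lambda^{*})\to0$ without a rate unless one additionally assumes regularity of $R(E^{*})$ as in Corollary 3.3. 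As written, your bound is $H^{r}\varepsilon_{H}(\lambda)+\varepsilon_{H}(\lambda)\varepsilon_{H}^{*}(\lambda^{*})+\varepsilon_{h}(\lambda)$, which need not be $\lesssim H^{r}\varepsilon_{H}(\lambda)+\varepsilon_{h}(\lambda)$. The paper sidesteps this term entirely: it treats the scaled vector $\lambda_{H}^{-1}(u_{H},w_{H})\in R(E_{H})$ as one object and chooses $\lambda^{-1}(u,w)\in R(E)$ with $\|\lambda_{H}^{-1}(u_{H},w_{H})-\lambda^{-1}(u,w)\|_{\mathbf{H_{1}}}\lesssim\widehat{\theta}(R(E),R(E_{H}))_{1}\lesssim H^{r}\varepsilon_{H}(\lambda)$, and then applies $T_{h}$ to this single difference, so no separate eigenvalue-error term ever appears. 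Replacing your first two terms by that single comparison repairs the argument and makes it essentially identical to the paper's proof.
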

\begin{proof}
 From (\ref{s2.13}) and step 2 of the procedure of two-grid discretization we have $(u^{h},w^{h})=(\lambda_{H})^{-1}T_{h}(u_{H},w_{H})$.
 Regarding $(\lambda_{H})^{-1}(u_{H},w_{H})\in R(E_{H})$ we can find $\lambda^{-1}(u,w)\in R(E)$ such that
 \begin{eqnarray*}
 &&|(\lambda_{H})^{-1}|\|(u_{H},w_{H})\|_{\mathbf{H_{1}}}\|\frac{(\lambda_{H})^{-1}(u_{H},w_{H})}{|(\lambda_{H})^{-1}|\|(u_{H},w_{H})\|_\mathbf{H_{1}}}
 -\frac{\lambda^{-1}(u,w)}{|(\lambda_{H})^{-1}|\|(u_{H},w_{H})\|_\mathbf{H_{1}}}\|_{\mathbf{H_{1}}}\nonumber\\
 &&~~~~~~=|(\lambda_{H})^{-1}|\|(u_{H},w_{H})\|_\mathbf{H_{1}}\inf\limits_{(v,z)\in R(E)}\|\frac{(\lambda_{H})^{-1}(u_{H},w_{H})}{|(\lambda_{H})^{-1}|\|(u_{H},w_{H})\|_\mathbf{H_{1}}}-(v,z)\|_\mathbf{H_{1}}\nonumber\\
 &&~~~~~~\lesssim \widehat{\theta}(R(E),R(E_{h}))_{1}.
 \end{eqnarray*}
 Note that $(u,w)$ is also the eigenfunction in $R(E)$ which leads that $(u,w)=\lambda^{-1} T(u,w)$. The definition of the continuous linear operator $T_{h}:\mathbf{H_{1}}\rightarrow \mathbf{H_{h}}$ implies us
\begin{eqnarray*}
\|(\lambda_{H})^{-1}T_{h}(u_{H},w_{H})-\lambda^{-1} T_{h}
(u,w)\|_{\mathbf{H}}\lesssim
\|(\lambda_{H})^{-1}(u_{H},w_{H})-\lambda^{-1}
(u,w)\|_{\mathbf{H}_{1}}.\end{eqnarray*}
 Then using Theorem 3.5 we can derive that
 \begin{eqnarray*}
&&\|(u^{h},w^{h})-(u,w)\|_{\mathbf{H}}=\|(\lambda_{H})^{-1}T_{h}(u_{H},w_{H})
-\lambda^{-1} T(u,w)\|_{\mathbf{H}}\\
&&~~~\leq \|(\lambda_{H})^{-1}T_{h}(u_{H},w_{H})-\lambda^{-1} T_{h}
(u,w)\|_{\mathbf{H}} +\|\lambda^{-1} T_{h} (u,w)-\lambda^{-1} T(u,w) \|_{\mathbf{H}}
\\
&&~~~\lesssim \|(\lambda_{H})^{-1}(u_{H},w_{H})-\lambda^{-1}
(u,w)\|_{\mathbf{H_{1}}} +|\lambda^{-1}|\|P_{h}T(u,w)-T
(u,w) \|_{\mathbf{H}}
\\
&&~~~\lesssim \widehat{\theta}(R(E),R(E_{h}))_{1}+\varepsilon_{h}(\lambda)\\
&&~~~\lesssim H^{r}\varepsilon_{H}(\lambda)
+\varepsilon_{h}(\lambda).
\end{eqnarray*}
Now we complete the proof of (\ref{s4.3}), and using the same method we can prove (\ref{s4.4}).
From (\ref{s4.2}), we have
\begin{eqnarray}
&&\nonumber|(\lambda^{h})^{-1}-\lambda^{-1}|=|\frac{A((u^{h},w^{h})-(u,w),(u^{h*},w^{h*})-(u^{*},w^{*}))}
{B((u^{h},w^{h}),(u^{h*},w^{h*}))}\\
&&~~~~~~~~~~~~~~~~~~~~~~~-\lambda^{-1}
\frac{B((u^{h},w^{h})-(u,w),(u^{h*},w^{h*})-(u^{*},w^{*}))}
{B((u^{h},w^{h}),(u^{h*},w^{h*}))}|\nonumber\\
&&~~~~~~~~~~~~~~~~~~~\lesssim |\frac{A((u^{h},w^{h})-(u,w),(u^{h*},w^{h*})-(u^{*},w^{*}))}
{B((u^{h},w^{h}),(u^{h*},w^{h*}))}|\nonumber\\
&&~~~~~~~~~~~~~~~~~~~~~~~+|\lambda^{-1}\frac{B((u^{h},w^{h})-(u,w),(u^{h*},w^{h*})-(u^{*},w^{*}))}
{B((u^{h},w^{h}),(u^{h*},w^{h*}))}|\nonumber\\
&&~~~~~~~~~~~~~~~~~~~\lesssim |\frac{\|(u^{h},w^{h})-(u,w)\|_{A}\|(u^{h*},w^{h*})-(u^{*},w^{*})\|_{A}}
{B((u^{h},w^{h}),(u^{h*},w^{h*}))}|\nonumber\\
&&~~~~~~~~~~~~~~~~~~~~~~~+|\frac{\|(u^{h},w^{h})-(u,w)\|_{\mathbf{H_{1}}}
\|(u^{h*},w^{h*})-(u^{*},w^{*})\|_{\mathbf{H}}}
{B((u^{h},w^{h}),(u^{h*},w^{h*}))}|\nonumber\\
&&~~~~~~~~~~~~~~~~~~~\lesssim |\frac{\|(u^{h},w^{h})-(u,w)\|_{\mathbf{H}}
\|(u^{h*},w^{h*})-(u^{*},w^{*})\|_{\mathbf{H}}}
{B((u^{h},w^{h}),(u^{h*},w^{h*}))}|,\nonumber
\end{eqnarray}
\begin{eqnarray*}
&&B((u^{h},w^{h}),(u^{h*},w^{h*}))
=B((u^{h},w^{h}),(u^{h*},w^{h*}))-B((u,w),(u^{*},w^{*}))\\
&&~~~~~~+B((u,w),(u^{*},w^{*})) -B((u_{H},w_{H}),
(u_{H}^*,w_{H}^{*})) +B((u_{H},w_{H}),
(u_{H}^*,w_{H}^{*})).
\end{eqnarray*}
We know that $(u^{h},w^{h})$ and $(u_{H},w_{H})$ approximate the same eigenfunction $(u,w)$, $(u^{h*},w^{h*})$ and $(u_{H}^{*},w_{H}^{*})$ approximate the same adjoint eigenfunction $(u^{*},w^{*})$, and $|B((u_{H},w_{H}),(u_{H}^*,w_{H}^{*}))|$ has a positive lower bound uniformly with respect to $H$.
So
\begin{eqnarray*}
&&|(\lambda^{h})^{-1}-\lambda^{-1}|\lesssim
|\frac{\|(u^{h},w^{h})-(u,w)\|_{\mathbf{H}}
\|(u^{h*},w^{h*})-(u^{*},w^{*})\|_{\mathbf{H}}}
{B((u^{h},w^{h}),(u^{h*},w^{h*}))}|\nonumber\\
&&~~~~~~~~~~~~~~~~~~~\lesssim\|(u^{h},w^{h})-(u,w)\|_{\mathbf{H}}
\|(u^{h*},w^{h*})-(u^{*},w^{*})\|_{\mathbf{H}}.
\end{eqnarray*}
Referring to (\ref{s4.3}) and (\ref{s4.4}) we get (\ref{s4.5}).
\end{proof}
\begin{corollary}
 Assume that the conditions of Theorem 4.3 hold,
and $R(E), R(E^{*})\subset \mathbf{H}\cap (H^{2+\sigma_{1}}(\Omega)\times
H^{2+\sigma_{2}}(\Omega))$. Then (\ref{s4.3})-(\ref{s4.5}) hold
with
\begin{eqnarray}\label{s4.6}
\varepsilon_{H}(\lambda)\leq H^{\sigma},\varepsilon^{*}_{H}(\lambda)\leq H^{\sigma},\varepsilon_{h}(\lambda)\leq h^{\sigma},\varepsilon^{*}_{h}(\lambda)\leq h^{\sigma},~\sigma=min\{\sigma_{1},\sigma_{2}\}.
\end{eqnarray}
\end{corollary}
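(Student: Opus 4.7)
The plan is to derive Corollary 4.4 as a routine combination of Theorem 4.3 with the interpolation inequalities (\ref{s3.7})--(\ref{s3.8}) supplied by hypothesis $(C2)$, exactly in the spirit of how Corollary 3.3 was deduced from Theorem 3.2. All of the work reduces to bounding the four abstract approximation numbers $\varepsilon_{H}(\lambda)$, $\varepsilon_{H}^{*}(\lambda^{*})$, $\varepsilon_{h}(\lambda)$, and $\varepsilon_{h}^{*}(\lambda^{*})$ by the stated powers of the mesh sizes and then substituting into (\ref{s4.3})--(\ref{s4.5}).

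First I would estimate $\varepsilon_{H}(\lambda)$. Pick an arbitrary $(u,w)\in R(E)$ with $\|(u,w)\|_{\mathbf{H}}=1$. The regularity hypothesis places $u\in H^{2+\sigma_{1}}(\Omega)$ and $w\in H^{1+\sigma_{2}}(\Omega)$ (the exponent $2+\sigma_{2}$ asserted in the hypothesis being at least as strong as the $1+\sigma_{2}$ used by $(C2)$), so (\ref{s3.7}) with $s=2$ yields some $v\in X_{H}$ with $\|u-v\|_{2}\lesssim H^{\sigma_{1}}\|u\|_{2+\sigma_{1}}$, and (\ref{s3.8}) with $t=1$ yields some $z\in Y_{H}$ with $\|w-z\|_{1}\lesssim H^{\sigma_{2}}\|w\|_{1+\sigma_{2}}$. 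Summing these two bounds and using $\sigma=\min\{\sigma_{1},\sigma_{2}\}$ gives
$$\|(u,w)-(v,z)\|_{\mathbf{H}}\lesssim H^{\sigma}\bigl(\|u\|_{2+\sigma_{1}}+\|w\|_{1+\sigma_{2}}\bigr).$$

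Next, because $\lambda$ is an isolated eigenvalue of the compact operator $T$ (Lemma 2.1), $R(E)$ is finite-dimensional, so the higher-regularity norm is equivalent on $R(E)$ to $\|\cdot\|_{\mathbf{H}}$; hence $\|u\|_{2+\sigma_{1}}+\|w\|_{1+\sigma_{2}}\lesssim 1$ uniformly on the unit sphere. Taking the supremum produces $\varepsilon_{H}(\lambda)\lesssim H^{\sigma}$. Replacing $H$ by $h$ yields $\varepsilon_{h}(\lambda)\lesssim h^{\sigma}$, and the identical argument applied to $R(E^{*})$ gives $\varepsilon_{H}^{*}(\lambda^{*})\lesssim H^{\sigma}$ and $\varepsilon_{h}^{*}(\lambda^{*})\lesssim h^{\sigma}$. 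Inserting these four bounds into (\ref{s4.3})--(\ref{s4.5}) then immediately delivers the desired orders of convergence.

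There is no genuine obstacle here: the only nontrivial point worth flagging is the finite-dimensionality of $R(E)$ and $R(E^{*})$, which is what converts the pointwise higher-regularity bound into a uniform constant independent of the chosen eigenfunction. Everything else is a one-line substitution into results already established.
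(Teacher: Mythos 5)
Your proposal is correct and follows exactly the route the paper intends: the corollary is stated without an explicit proof, being the routine substitution of the $(C2)$ interpolation bounds (applied to the regular eigenfunctions in $R(E)$ and $R(E^{*})$, with uniformity over the unit sphere secured by the finite dimensionality of these spectral subspaces) into the abstract estimates (\ref{s4.3})--(\ref{s4.5}) of Theorem 4.3, just as Corollary 3.3 follows from Theorem 3.2. Your explicit handling of the norm-equivalence/uniform-constant point is a correct and welcome addition rather than a deviation.
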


\section{Numerical results}

\indent Now we show some examples in the last section. The numerical results of finite element discretization and two-grid discretization are both presented in our experiment. In this section, $X_{h}\in H_{0}^{2}(\Omega)$ consists of BFS element, $Y_{h}\in H_{0}^{1}(\Omega)$ consists of biquadratic Lagrange element. Let $\{\phi_{i}\}_{i=1}^{N_{h}}$ be the basis for $X_{h}$ and $\{\psi_{i}\}_{i=1}^{M_{h}}$ be the basis for $Y_{h}$. We define the following matrices
\begin{center} \footnotesize
\begin{tabular}{lllll}\hline
Matrix&Dimension&Definition\\\hline
$A^{1}$&$N_h\times N_h$&$A^{1}_{ij}=(\Delta\phi_{j},\Delta\phi_{i})_{n-1}$\\
$A^{2}$&$M_h\times M_h$&$A^{2}_{ij}=(\nabla\psi_{j},\nabla\psi_{i})$\\
$S^{1}$&$N_h\times N_h$&$S^{1}_{ij}=(\phi_{j},\Delta\phi_i)_{n-1}$\\
$S^{2}$&$N_h\times N_h$&$S^{2}_{ij}=(\Delta\phi_{j},n\phi_i)_{n-1}$\\
$R$&$N_h\times M_h$&$R_{ij}=(\nabla\psi_j,\nabla\phi_i)$\\
$M$&$M_h\times N_h$&$M_{ij}=(n\phi_j,\psi_i)_{n-1}$\\
\hline
\end{tabular}
\end{center}
\indent Then the finite element approximation
(\ref{s2.11}) and its dual conjugated problem can be written as
follows (see \cite{8}):
\begin{eqnarray} \label{s5.1}
\lambda_h\left(
\begin{array}{lcr}
A^1&0\\
0&A^2
\end{array}
\right) \left (
\begin{array}{lcr}
\mathbf{u}\\
\mathbf{w}
\end{array}
\right)=  -\left(
\begin{array}{lcr}
S^1+S^2&-R\\
M&0
\end{array}
\right) \left (
\begin{array}{lcr}
\mathbf{u}\\
\mathbf{w}
\end{array}
\right),
\end{eqnarray}
\begin{eqnarray} \label{s5.2}
\lambda_h\left(
\begin{array}{lcr}
A^1&0\\
0&A^2
\end{array}
\right)^{\mathbf{T}} \left (
\begin{array}{lcr}
\mathbf{u}^*\\
\mathbf{w}^*
\end{array}
\right)=  -\left(
\begin{array}{lcr}
S^1+S^2&-R\\
M&0
\end{array}
\right)^{\mathbf{T}} \left (
\begin{array}{lcr}
\mathbf{u}^*\\
\mathbf{w}^*
\end{array}
\right),
\end{eqnarray}
where $\mathbf{u}=(u_{1},\cdot \cdot \cdot,u_{N_{h}})^T$ and $\mathbf{w}=(w_{1},\cdot \cdot \cdot,w_{M_{h}})^T$ such that $u_h=\sum_{i=1}^{N_h}u_i\phi_i$ and $w_h=\sum_{i=1}^{M_h}w_i\psi_i$($\mathbf{u}^*$ and $\mathbf{w}^*$ are similar to $\mathbf{u}$ and $\mathbf{w}$). We use the Matlab eigs command to compute the eigenvalues and the eigenfunctions.\\
\indent As for two-grid discretization, we assemble the following matrices:
  \begin{eqnarray} \label{s5.3}
\lambda_H\left(
\begin{array}{lcr}
A^1&0\\
0&A^2
\end{array}
\right) \left (
\begin{array}{lcr}
\mathbf{u}^h\\
\mathbf{w}^h
\end{array}
\right)=  -\left(
\begin{array}{lcr}
S^1+S^2&-R\\
M&0
\end{array}
\right) \left (
\begin{array}{lcr}
\mathbf{u}_H\\
\mathbf{w}_H
\end{array}
\right),
\end{eqnarray}
\begin{eqnarray} \label{s5.4}
\lambda_H\left(
\begin{array}{lcr}
A^1&0\\
0&A^2
\end{array}
\right)^{\mathbf{T}} \left (
\begin{array}{lcr}
\mathbf{u}^{h*}\\
\mathbf{w}^{h*}
\end{array}
\right)=  -\left(
\begin{array}{lcr}
S^1+S^2&-R\\
M&0
\end{array}
\right)^{\mathbf{T}} \left (
\begin{array}{lcr}
\mathbf{u}_{H}^*\\
\mathbf{w}_{H}^*
\end{array}
\right).
\end{eqnarray}
$\lambda_H$, $(\mathbf{u}_{H},\mathbf{w}_{H})$ and $(\mathbf{u}_{H}^*,\mathbf{w}_{H}^*)$ are got via Step 1 of two-grid discretization, $(\mathbf{u}^{h},\mathbf{w}^{h})$ and $(\mathbf{u}^{h*},\mathbf{w}^{h*})$ are the solutions of boundary value problems on a fine grid $\pi_h$ respectively.\\
\indent The final purpose is to find the scalar $k\in \mathbb{C}$, therefore we use some symbols marking the values we get from our numerical experiment:\\
 $k_{j,H}=\frac{1}{\sqrt{\lambda_{j,H}}}$: the $jth$ eigenvalue of (\ref{s2.11}) on $\pi_H$,\\
 $k_{j,h}=\frac{1}{\sqrt{\lambda_{j,h}}}$: the $jth$ eigenvalue of (\ref{s2.11}) on $\pi_h$,\\
 $k_{j}^h=\frac{1}{\sqrt{\lambda_{j}^h}}$: the $jth$ eigenvalue via the procedure of two-grid discretization.\\
\indent  We implement all computations using MATLAB 2012a on a Dell
notebook PC with 16G memory. Our program is compiled under the
package of iFEM \cite{chen}

 \indent{\bf Example 1.}~~We chose the unit square $\Omega=(-\frac{1}{2},\frac{1}{2})\times (-\frac{1}{2},\frac{1}{2})$ and consider the function $n(x)=16$ and $n(x)=8+x_1-x_2$ since we can then compare the results computed here with the other literature. In addition, we add a computing example $n(x)=8+4|x|$ in the unit square. \\

 \begin{table}
\caption{The eigenvalues on the unit square,
$n=16$.}
\begin{center} \footnotesize
\begin{tabular}{ccccccc}\hline
$j$&$H$&$h$&$k_{j,H}$&$k_{j,h}$&$k_{j}^{h}$\\\hline
1&  $\frac{\sqrt2}{4}$& $\frac{\sqrt2}{16}$&    1.8853376219&   1.8796196028&  1.8796663603&\\
1&  $\frac{\sqrt2}{8}$& $\frac{\sqrt2}{32}$&    1.8800198464&   1.8795929802&   1.8795931892&\\
1&  $\frac{\sqrt2}{16}$& $\frac{\sqrt2}{64}$&    1.8796196028&   1.8795912869&   1.8795912878&\\
1&  $\frac{\sqrt2}{16}$&    $\frac{\sqrt2}{128}$&   1.8796196028&   1.8795911807&   1.8795911816&\\
1&  $\frac{\sqrt2}{32}$&    $\frac{\sqrt2}{256}$&   1.8795929802&   1.8795911742&   1.8795911804&\\
2,3&  $\frac{\sqrt2}{4}$& $\frac{\sqrt2}{16}$&    2.4663673974&   2.4443616792&   2.4452943055&\\
2,3&  $\frac{\sqrt2}{8}$& $\frac{\sqrt2}{32}$&    2.4461052685&   2.4442441026&       2.4442473340&\\
2,3&  $\frac{\sqrt2}{16}$&    $\frac{\sqrt2}{64}$&   2.4443616792&   2.4442366022&    2.4442366150&\\
2,3&  $\frac{\sqrt2}{16}$&    $\frac{\sqrt2}{128}$&   2.4443616792&   2.4442361308&   2.4442361438&\\
2,3&  $\frac{\sqrt2}{32}$&    $\frac{\sqrt2}{256}$&   2.4442441026&   2.4442361014&   2.4442361033\\
4&  $\frac{\sqrt2}{4}$& $\frac{\sqrt2}{16}$&    2.8833995402&   2.8665486898&   2.8675716128&\\
4&  $\frac{\sqrt2}{8}$& $\frac{\sqrt2}{32}$&    2.8680026392&   2.8664462104&   2.8664512778&\\
4&  $\frac{\sqrt2}{16}$&    $\frac{\sqrt2}{64}$&   2.8665486898&   2.8664395581&   2.8664395799&\\
4&  $\frac{\sqrt2}{16}$&    $\frac{\sqrt2}{128}$&   2.8665486898&   2.8664391379&   2.8664391599&\\
4&  $\frac{\sqrt2}{32}$&    $\frac{\sqrt2}{256}$&   2.8664462104&   2.8664391116&   2.8664391127&\\
\\\hline
\end{tabular}
\end{center}
\end{table}

\begin{table}
\caption{The eigenvalues on the unit square,
$n=8+x_1-x_2$.}
\begin{center} \footnotesize
\begin{tabular}{ccccccc}\hline
$j$&$H$&$h$&$k_{j,H}$&$k_{j,h}$&$k_{j}^{h}$\\\hline
1&  $\frac{\sqrt2}4$&   $\frac{\sqrt2}{16}$&    2.8377800453&   2.8222600331&   2.8223680171\\
1&  $\frac{\sqrt2}{8}$& $\frac{\sqrt2}{32}$&    2.8232753032&   2.8221938157&   2.8221943214\\
1&  $\frac{\sqrt2}{16}$& $\frac{\sqrt2}{64}$&    2.8222600331&   2.8221896223&   2.8221896244\\
1&  $\frac{\sqrt2}{16}$&    $\frac{\sqrt2}{128}$&   2.8222600331&   2.8221893584&   2.8221893606\\
1&  $\frac{\sqrt2}{32}$&    $\frac{\sqrt2}{256}$&   2.8221938157&   2.8221893421&   2.8221893420\\
2&  $\frac{\sqrt2}{4}$& $\frac{\sqrt2}{16}$&    3.5767095970&   3.5389113678&   3.5413242169\\
2&  $\frac{\sqrt2}{8}$& $\frac{\sqrt2}{32}$&    3.5418993485&   3.5387103645&   3.5387175444\\
2&  $\frac{\sqrt2}{16}$&    $\frac{\sqrt2}{64}$&   3.5389113678&   3.5386975540&   3.5235712458\\
2&  $\frac{\sqrt2}{16}$&    $\frac{\sqrt2}{128}$&   3.5389113678&   3.5386967489&   3.5386967774\\
2&  $\frac{\sqrt2}{32}$&    $\frac{\sqrt2}{256}$&   3.5387103645&   3.5386966983&   3.5386966986\\
5,6&    $\frac{\sqrt2}{4}$& $\frac{\sqrt2}{16}$&    4.4421169489 &  4.4966278055 &  4.4975272996 \\
 &  &   &                                      $\pm$0.8175028917$i$&   $\pm$0.8718292888$i$&   $\pm$0.8655562063$i$\\
5,6&    $\frac{\sqrt2}{8}$& $\frac{\sqrt2}{32}$&    4.4966139921 &   4.4965576676 & 4.4964837836 \\
&&&                                             $\pm$0.8766783148$i$&     $\pm$0.8715036461$i$&  $\pm$0.8715549008$i$\\
5,6&    $\frac{\sqrt2}{16}$&    $\frac{\sqrt2}{64}$&   4.4966278055 &   4.4965523241&     4.4965520112 \\
&&&                                               $\pm$0.8718292888$i$&   $\pm$0.8714831489$i$&     $\pm$0.8714833311$i$\\
5,6&    $\frac{\sqrt2}{16}$&    $\frac{\sqrt2}{128}$&   4.4966278055 &   4.4965519778&     4.4965516628 \\
&&&                                               $\pm$0.8718292888$i$&   $\pm$0.8714818661$i$&     $\pm$0.8714820491$i$\\
5,6&    $\frac{\sqrt2}{32}$&    $\frac{\sqrt2}{256}$&   4.4965576676 &   4.4965519559&       4.4965519547\\
&&&                                                    $\pm$0.8715036461$i$&   $\pm$0.8714817861$i$~~&  $\pm$0.8714817865$i$\\
\\\hline
\end{tabular}
\end{center}
\end{table}

\begin{table}
\caption{The eigenvalues on the unit square,
$n=8+4|x|$.}
\begin{center} \footnotesize
\begin{tabular}{ccccccc}\hline
$j$&$H$&$h$&$k_{j,H}$&$k_{j,h}$&$k_{j}^{h}$\\\hline
1&  $\frac{\sqrt2}{4}$& $\frac{\sqrt2}{16}$&    2.6153609845&   2.6036641359&  2.6037514197&\\
1&  $\frac{\sqrt2}{8}$& $\frac{\sqrt2}{32}$&    2.6044614497&   2.6036117648&   2.6036122672&\\
1&  $\frac{\sqrt2}{16}$& $\frac{\sqrt2}{64}$&    2.6036641359&   2.6036084278&   2.6036084300&\\
1&  $\frac{\sqrt2}{16}$&    $\frac{\sqrt2}{128}$&   2.6036641359&   2.6036082152&   2.6036082174&\\
1&  $\frac{\sqrt2}{32}$&    $\frac{\sqrt2}{256}$&   2.6036117648&   2.6036082015&   2.6036082016&\\
2,3&  $\frac{\sqrt2}{4}$& $\frac{\sqrt2}{16}$&    3.3100272777&   3.2863070722&   3.2876330633&\\
2,3&  $\frac{\sqrt2}{8}$& $\frac{\sqrt2}{32}$&    3.2888679496&   3.2861317830&   3.2861379266&\\
2,3&  $\frac{\sqrt2}{16}$&    $\frac{\sqrt2}{64}$&   3.2863070722&   3.2861205736&   3.2861205986&\\
2,3&  $\frac{\sqrt2}{16}$&    $\frac{\sqrt2}{128}$&   3.2863070722&   3.2861198684&   3.2861198936&\\
2,3&  $\frac{\sqrt2}{32}$&    $\frac{\sqrt2}{256}$&   3.2861317830&   3.2861198243&   3.2861198235\\
4&  $\frac{\sqrt2}{4}$& $\frac{\sqrt2}{16}$&    3.8113648152&   3.7967178330&   3.7980244032&\\
4&  $\frac{\sqrt2}{8}$& $\frac{\sqrt2}{32}$&    3.7987461560&   3.7965708629&   3.7965804312&\\
4&  $\frac{\sqrt2}{16}$&    $\frac{\sqrt2}{64}$&   3.7967178330&   3.7965612849&    3.7965613281&\\
4&  $\frac{\sqrt2}{16}$&    $\frac{\sqrt2}{128}$&   3.7967178330&   3.7965606792&   3.7965607226&\\
4&  $\frac{\sqrt2}{32}$&    $\frac{\sqrt2}{256}$&   3.7965708629&   3.7965606412&   3.7965606413&\\
\\\hline
\end{tabular}
\end{center}
\end{table}

 \begin{figure}
\centering
\includegraphics[width=0.6\textwidth]{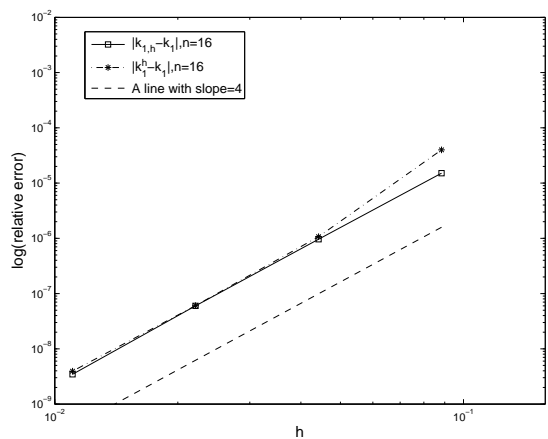}
\caption{{ Relative error curves on the unit square with $n=16$}}
 \end{figure}
\begin{figure}
\centering
\includegraphics[width=0.6\textwidth]{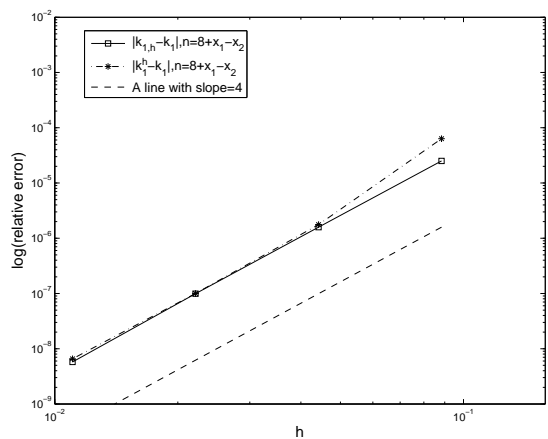}
\caption{{ Relative error curves on the unit square with $n=8+x_1-x_2$}}
 \end{figure}
 \begin{figure}
\centering
\includegraphics[width=0.6\textwidth]{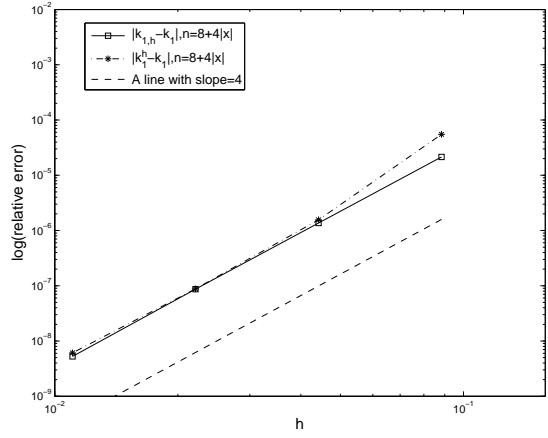}
\caption{{ Relative error curves on the unit square with $n=8+4|x|$}}
 \end{figure}

\begin{table}
\caption{The eigenvalues on the L-shaped
domain, $n=16$.}
\begin{center} \footnotesize
\begin{tabular}{ccccccc}\hline
$j$&$H$&$h$&$k_{j,H}$&$k_{j,h}$&$k_{j}^{h}$\\\hline
1&  $\frac{\sqrt2}{4}$& $\frac{\sqrt2}{16}$&    1.4968609397&   1.4802425308&   1.4806917664\\
1&  $\frac{\sqrt2}{8}$& $\frac{\sqrt2}{32}$&    1.4850588790&   1.4780405044&   1.4781253577\\
1&  $\frac{\sqrt2}{16}$&    $\frac{\sqrt2}{64}$&    1.4802425308&   1.4770116526&   1.4770299626\\
1&  $\frac{\sqrt2}{32}$&    $\frac{\sqrt2}{128}$&   1.4780405044&   1.4765287608&   1.4765327848\\
1&  $\frac{\sqrt2}{32}$&    $\frac{\sqrt2}{256}$&   1.4780405044&   ---&            1.4763071753\\
2&  $\frac{\sqrt2}{4}$& $\frac{\sqrt2}{16}$&    1.5755693911&   1.5698997470&   1.5699245378\\
2&  $\frac{\sqrt2}{8}$& $\frac{\sqrt2}{32}$&    1.5705428361&   1.5697715392&   1.5697719333\\
2&  $\frac{\sqrt2}{16}$&    $\frac{\sqrt2}{64}$&    1.5698997470&   1.5697385282&   1.5697385527\\
2&  $\frac{\sqrt2}{32}$&    $\frac{\sqrt2}{128}$&   1.5697715392&   1.5697293608&   1.5697293627\\
2&  $\frac{\sqrt2}{32}$&    $\frac{\sqrt2}{256}$&   1.5697715392&            ---&   1.5697267819\\
3&  $\frac{\sqrt2}{4}$& $\frac{\sqrt2}{16}$&    1.7158442839&   1.7061971095&   1.7057822065\\
3&  $\frac{\sqrt2}{8}$& $\frac{\sqrt2}{32}$&    1.7077892476&   1.7055794987&   1.7055176311\\
3&  $\frac{\sqrt2}{16}$&    $\frac{\sqrt2}{64}$&    1.7061971095&   1.7052949847&   1.7052821504\\
3&  $\frac{\sqrt2}{32}$&    $\frac{\sqrt2}{128}$&   1.7055794987&   1.7051612774&   1.7051584514\\
3&  $\frac{\sqrt2}{32}$&    $\frac{\sqrt2}{256}$&   1.7055794987&            ---&   1.7050946832\\
4&  $\frac{\sqrt2}{4}$& $\frac{\sqrt2}{16}$&    1.7871637125&   1.7831474550&   1.7831764131\\
4&  $\frac{\sqrt2}{8}$& $\frac{\sqrt2}{32}$&    1.7834444240&   1.7831207586&   1.7831207597\\
4&  $\frac{\sqrt2}{16}$&    $\frac{\sqrt2}{64}$&    1.7831474550&   1.7831171038&   1.7831170947\\
4&  $\frac{\sqrt2}{32}$&    $\frac{\sqrt2}{128}$&   1.7831207586&   1.7831163288&   1.7831163281\\
4&  $\frac{\sqrt2}{32}$&    $\frac{\sqrt2}{256}$&   1.7831207586&            ---&   1.7831161310\\
\\\hline
\end{tabular}
\end{center}
\end{table}

\begin{table}
\caption{The eigenvalues on the L-shaped
domain, $n=8+x_1-x_2$.}
\begin{center} \footnotesize
\begin{tabular}{ccccccc}\hline
$j$&$H$&$h$&$k_{j,H}$&$k_{j,h}$&$k_{j}^{h}$\\\hline
1&  $\frac{\sqrt2}{4}$& $\frac{\sqrt2}{16}$&    2.3273092209&   2.3069609372&   2.3077590615\\
1&  $\frac{\sqrt2}{8}$& $\frac{\sqrt2}{32}$&    2.3126825597&   2.3043812480&   2.3045405243\\
1&  $\frac{\sqrt2}{16}$&    $\frac{\sqrt2}{64}$&    2.3069609372&   2.3032153953&   2.3032156195\\
1&  $\frac{\sqrt2}{32}$&    $\frac{\sqrt2}{128}$&  2.3043812480&   2.3026187675&    2.3026263111\\
1&  $\frac{\sqrt2}{32}$&    $\frac{\sqrt2}{256}$&  2.3043812480&   ---&             2.3023645323\\
2&  $\frac{\sqrt2}{4}$& $\frac{\sqrt2}{16}$&    2.4080210782&   2.3960501482&   2.3961193118\\
2&  $\frac{\sqrt2}{8}$& $\frac{\sqrt2}{32}$&    2.3973860151&   2.3957859638&   2.3957867865\\
2&  $\frac{\sqrt2}{16}$&    $\frac{\sqrt2}{64}$&    2.3960501482&   2.3957181881&   2.3957182403\\
2&  $\frac{\sqrt2}{32}$&    $\frac{\sqrt2}{128}$&   2.3957859638&   2.3956993970&   2.3956994013\\
2&  $\frac{\sqrt2}{32}$&    $\frac{\sqrt2}{256}$&   2.3957859638&            ---&    2.3956940981\\
5,6&    $\frac{\sqrt2}{4}$& $\frac{\sqrt2}{16}$&    2.9097574473&    2.9272578941&   2.9226353850\\
&&&                                            $\pm$0.6019761082$i$&   $\pm$0.5686107059$i$&   $\pm$0.5684665341$i$\\
5,6&    $\frac{\sqrt2}{8}$& $\frac{\sqrt2}{32}$&    2.9290487145 &      2.9257100894   &  2.9252369019 \\
&&&                                               $\pm$0.5742896976$i$&   $\pm$0.5664333903$i$&   $\pm$0.5664645461$i$\\
5,6&    $\frac{\sqrt2}{16}$&    $\frac{\sqrt2}{64}$&    2.9272578941 &   2.9249315763&  2.9248341507\\
&&&                                               $\pm$0.5686107059$i$&    $\pm$0.5654493727$i$&  $\pm$0.5654547602$i$\\
5,6&    $\frac{\sqrt2}{32}$&    $\frac{\sqrt2}{128}$&   2.9257100894 &    2.9245612175&       2.9245399519\\
&&&                                                 $\pm$0.5664333903$i$&   $\pm$0.5649922109$i$& $\pm$0.5649932074$i$\\
5,6&    $\frac{\sqrt2}{32}$&    $\frac{\sqrt2}{256}$&   2.9257100894 &                       ---&       2.9243585119\\
&&&                                                 $\pm$0.5664333903$i$&                    & $\pm$0.5647798350$i$\\
\\\hline
\end{tabular}
\end{center}
\end{table}

\begin{table}
\caption{The eigenvalues on the L-shaped
domain, $n=8+4|x|$.}
\begin{center} \footnotesize
\begin{tabular}{ccccccc}\hline
$j$&$H$&$h$&$k_{j,H}$&$k_{j,h}$&$k_{j}^{h}$\\\hline
1&  $\frac{\sqrt2}{4}$& $\frac{\sqrt2}{16}$&    1.8901335914&   1.8737882252&   1.8742672861\\
1&  $\frac{\sqrt2}{8}$& $\frac{\sqrt2}{32}$&    1.8784255776&   1.8716730328&   1.8717681399\\
1&  $\frac{\sqrt2}{16}$&    $\frac{\sqrt2}{64}$&    1.8737882252&   1.8706818414&   1.8707028764\\
1&  $\frac{\sqrt2}{32}$&    $\frac{\sqrt2}{128}$&   1.8716730328&   1.8702153831&   1.8702200612\\
1&  $\frac{\sqrt2}{32}$&    $\frac{\sqrt2}{258}$&   1.8716730328&           ---&    1.8700019875\\
2&  $\frac{\sqrt2}{4}$& $\frac{\sqrt2}{16}$&    2.0064347086&   1.9980861736&   1.9981277663\\
2&  $\frac{\sqrt2}{8}$& $\frac{\sqrt2}{32}$&    1.9990284890&   1.9978995077&   1.9978999562\\
2&  $\frac{\sqrt2}{16}$&    $\frac{\sqrt2}{64}$&    1.9980861736&   1.9978514092&   1.9978514411\\
2&  $\frac{\sqrt2}{32}$&    $\frac{\sqrt2}{128}$&   1.9978995077&   1.9978380258&   1.9978380285\\
2&  $\frac{\sqrt2}{32}$&    $\frac{\sqrt2}{256}$&   1.9978995077&            ---&   1.9978342423\\
3&  $\frac{\sqrt2}{4}$& $\frac{\sqrt2}{16}$&    2.1999897619&   2.1946361010&   2.1946838299\\
3&  $\frac{\sqrt2}{8}$& $\frac{\sqrt2}{32}$&    2.1950294577&   2.1946045829&   2.1946047250\\
3&  $\frac{\sqrt2}{16}$&    $\frac{\sqrt2}{64}$&   2.1946361010&   2.1946010623&   2.1946010528\\
3&  $\frac{\sqrt2}{32}$&    $\frac{\sqrt2}{128}$&   2.1946045829&   2.1946004195&   2.1946004187\\
3&  $\frac{\sqrt2}{32}$&    $\frac{\sqrt2}{256}$&   2.1946045829&            ---&   2.1946002595\\
4&  $\frac{\sqrt2}{4}$& $\frac{\sqrt2}{16}$&    2.2917600981&   2.2632424103&   2.2626212924\\
4&  $\frac{\sqrt2}{8}$& $\frac{\sqrt2}{32}$&    2.2677627165&   2.2614697394&   2.2615130804\\
4&  $\frac{\sqrt2}{16}$&    $\frac{\sqrt2}{64}$&    2.2632424103&   2.2606440982&   2.2606589214\\
4&  $\frac{\sqrt2}{32}$&    $\frac{\sqrt2}{128}$&   2.2614697394&   2.2602536398&   2.2602568660\\
4&  $\frac{\sqrt2}{32}$&    $\frac{\sqrt2}{256}$&   2.2614697394&            ---&   2.2600736729\\
\\\hline
\end{tabular}
\end{center}
\end{table}

\begin{figure}
\centering
\includegraphics[width=0.6\textwidth]{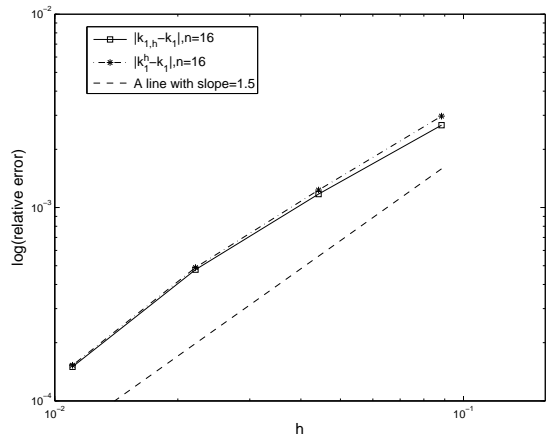}
\caption{{ Relative error curves on the L-shaped domain with $n=16$}}
 \end{figure}
\begin{figure}
\centering
\includegraphics[width=0.6\textwidth]{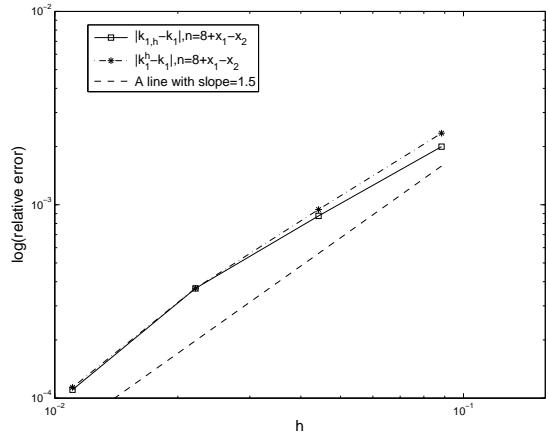}
\caption{{ Relative error curves on the L-shaped domain with $n=8+x_1-x_2$}}
 \end{figure}
 \begin{figure}
\centering
\includegraphics[width=0.6\textwidth]{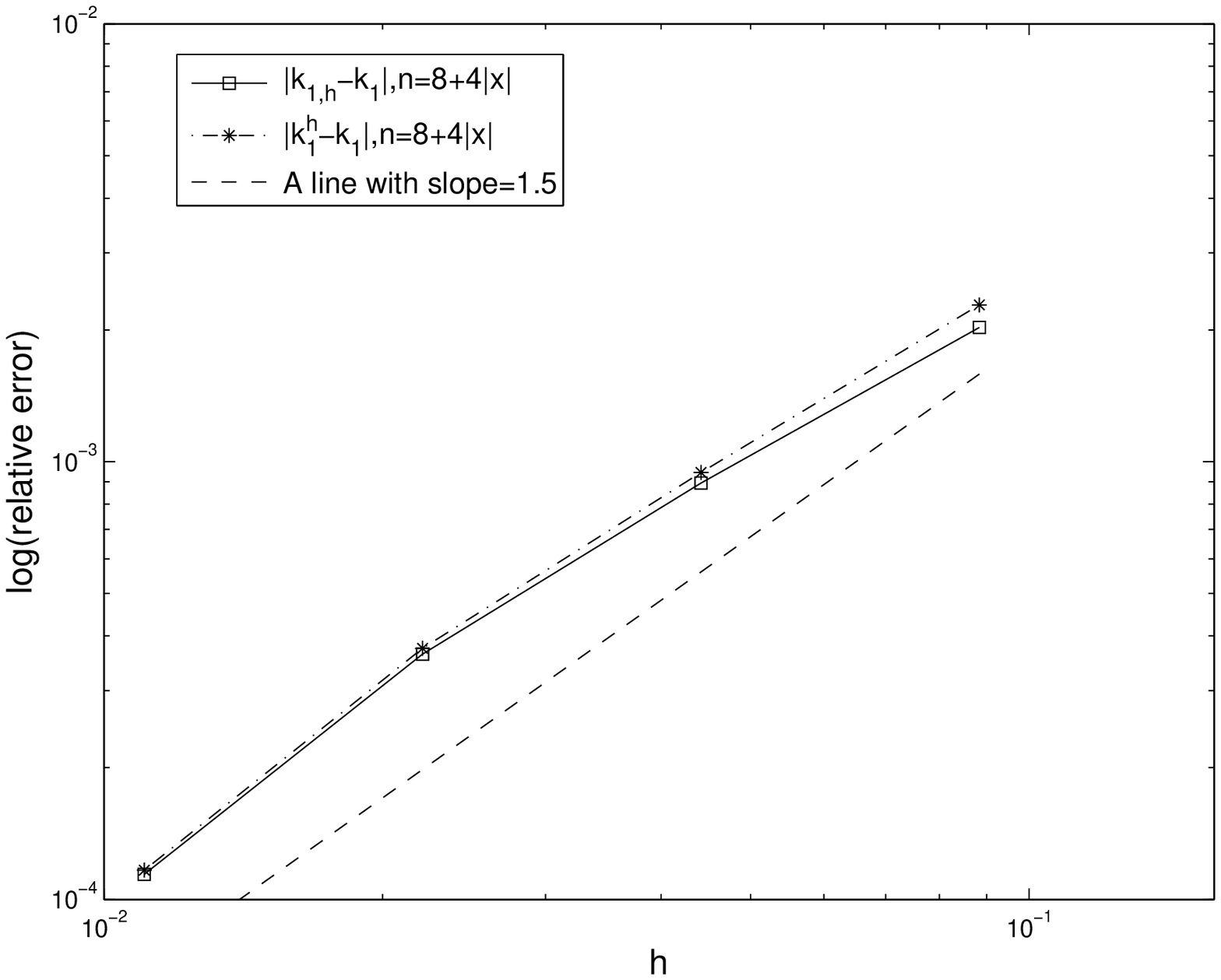}
\caption{{ Relative error curves on the L-shaped domain with $n=8+4|x|$}}
 \end{figure}

 \indent From Tables 1-2 we see that our results are the same as the numerical results in \cite{13}. Tables 1-3 tell us both traditional computational method and two-grid discretization have fast rates of convergence, and achieve the same convergence order in computing the eigenvalues. We also find that for real eigenvalues, as $h$ decreases gradually, $k_{j,h}$ and $k_j^h$ decrease simultaneously to approximate to the exact solution. So we have reason to believe that the two-grid discretization is efficient. It's worthwhile to note that $r=1$ and $(u,w)\in H_{0}^4\times H_{0}^{3}(\Omega)$ on unite square according to the regularity theory. When the ascent $\alpha=1$, based on (\ref{s3.5}) and (\ref{s3.9}), convergence order of eigenvalue approximation $k_{j,h}$ is 4; according to $(\ref{s4.5})$ and $(\ref{s4.6})$, while $H^3\lesssim h^2$, we also can make the convergence order of $k_{j}^h$ achieving 4. Using $k_{1,\frac{\sqrt{2}}{256}}$ as the exact value $k_1$, we plot relative error curves. As expected, Figures 1-3 tell us the convergence order satisfies the conclusion we get above.\\

 \indent{\bf Example 2.}~~We consider the L-shaped domain $\Omega=(-1,1)\times(-1,1)\backslash ([0,1]\times[-1,0])$
 with the function $n(x)=16$, $n(x)=8+x_1-x_2$ and $n(x)=8+4|x|$.
 The numerical results are presented in Tables 4-6.
  We find that when $h=\frac{\sqrt{2}}{256}$, the finite element discretization method is out of memory during computing,
  but the two-grid discretization method can finish the computational mission successfully. We infer it is due to the eigs
  command which may occupy much more physical memory. Therefore we believe the two-grid discreization is more advantage than the traditional one.
  Taking $k_{1}^{\frac{\sqrt{2}}{256}}$ as the exact value $k_1$, we plot relative error curves. Figures 4-6 suggest that the convergence rates
  are slow relatively since the smoothness of functions $(u,w)$ is weak for the L-shaped domain.\\
\section*{Acknowledgments}
This work is supported by the National Science Foundation of China
(Grant No. 11561014 )

\end{document}